\documentclass{SCAEOL}
\numberwithin{equation}{section}

\usepackage{multirow}
\usepackage{enumerate}
\usepackage{graphicx}
\usepackage{epstopdf}
\usepackage{hyperref}

\DeclareMathOperator{\diag}{diag}
\DeclareMathOperator{\spn}{span}
\newcommand\x{\mathbf{x}}
\newcommand\y{\mathbf{y}}

\begin{document}

\Year{2020} %
\Month{Apr}
\Vol{XX} %
\No{XX} %
\BeginPage{1} %
\EndPage{XX} %
\AuthorMark{Liu W et al.}

\title[Convergence analysis of a spectral-Galerkin-type search extension method]{Convergence analysis of a spectral-Galerkin-type search extension method for finding multiple solutions to semilinear problems\footnotetext{This work was funded by National Natural Science Foundation of China (Grant No. 11771138, 11171104, 11971007, 11601148) and the Opening Fund for Innovation Platform of Education  Department in Hunan Province (Grant No. 18K025).}}{Published in SCIENTIA SINICA Mathematica (in Chinese).\\[1ex]
{\bf Citation:} Liu W, Xie Z Q, Yuan Y J. Convergence analysis of a spectral-Galerkin-type search extension method for finding multiple solutions to semilinear problems (in Chinese). Sci Sin Math, 2021, 51: 1407–1431, doi: \href{http://doi.org/10.1360/SCM-2019-0357}{10.1360/SCM-2019-0357}}


\author{Wei Liu}{}
\author{Ziqing Xie}{}
\author{Yongjun Yuan}{Corresponding author.}

\address{LCSM (MOE) and School of Mathematics and Statistics, Hunan Normal University, Changsha {410081}, P.R. China}
\Emails{~\tt wliu.hunnu@foxmail.com, ziqingxie@hunnu.edu.cn, yuanyongjun0301@163.com}
\maketitle


{\begin{center}
\parbox{14.5cm}{
\begin{abstract}
In this paper, we develop an efficient spectral-Galerkin-type search extension method (SGSEM) for finding multiple solutions to semilinear elliptic boundary value problems. This method constructs effective initial data for multiple solutions based on the linear combinations of some eigenfunctions of the corresponding linear eigenvalue problem, and thus takes full advantage of the traditional search extension method in constructing initials for multiple solutions. Meanwhile, it possesses a low computational cost and high accuracy due to the employment of an interpolated coefficient Legendre-Galerkin spectral discretization. By applying the Schauder's fixed point theorem and other technical strategies, the existence and spectral convergence of the numerical solution corresponding to a specified true solution are rigorously proved. In addition, the uniqueness of the numerical solution in a sufficiently small neighborhood of each specified true solution is strictly verified. Numerical results demonstrate the feasibility and efficiency of our algorithm and present different types of multiple solutions.
\vspace{-3mm}
\end{abstract}}
\end{center}}

\keywords{
semilinear elliptic problems,
multiple solutions,
search extension method,
interpolated coefficient spectral method,
spectral convergence,
Schauder's fixed point theorem
}

\MSC{35J25, 65N35, 65H10, 47H10}

\renewcommand{\baselinestretch}{1.2}
\begin{center} \renewcommand{\arraystretch}{1.5}
{\begin{tabular}{lp{0.8\textwidth}} \hline \scriptsize
{\bf ~~~~~~~~~~}\!\!\!\!&\scriptsize
\end{tabular}}\end{center}

\baselineskip 11pt\parindent=10.8pt  \wuhao
\section{Introduction}
\label{sec:1}
Consider finding multiple solutions to the following two-dimensional semilinear elliptic Dirichlet boundary value problem
\begin{equation}\label{HE1}
 \left\{\begin{array}{rll}
  \mathcal{L}(u)=-\Delta u-f(u)=0 &\mbox{in}& \Omega,\\
  u=0 & \mbox{on} & \partial\Omega,
 \end{array}\right.
\end{equation}
where $\Delta$ is the Laplacian, $\Omega$ is a rectangular domain in $\mathbb{R}^2$, and $f:\mathbb{R}\to\mathbb{R}$ is an appropriately smooth nonlinear function. The model problem \eqref{HE1} is widely found in various fields of modern science, such as astrophysics, condensed matter physics, material science and biomathematics \cite{BC2013KRM,Nicolis,XYZ12,ZRSD}. Under certain regularity and growth assumptions on $f$, the existence and multiplicity of solutions to \eqref{HE1} have been broadly studied over the past decades, see \cite{AR,HL,CKC,Ni,Rab,sqw} and the references therein.

In recent years, various effective numerical algorithms for finding multiple solutions to the semilinear boundary value problem \eqref{HE1} have been developed. Among them, there are a class of methods based on certain minimax principles and optimization algorithms to seek multiple critical points of the corresponding variational energy functional, including the mountain pass algorithm \cite{cys}, the high-linking algorithm \cite{ZHD} and the local minimax method \cite{LZ1,LZ2,XYZ12}, etc.. There are also other methods which discretize directly the boundary value problem and then solve the resulting nonlinear algebraic system for multiple solutions with suitable initial data, such as the search extension method (SEM) \cite{CX2000,CX,CXbook,CX3}, the eigenfunction expansion method (EEM) \cite{ZZY}, the two-level spectral method \cite{WHL2018SISC} and several Newton type methods \cite{XYZ15,FBF2015SISC}. To be more detailed, the SEM was proposed by Chen and Xie, it first searches for initial guesses based on the linear combinations of several eigenfunctions of the Laplacian, then increases the number of eigenfunctions gradually to obtain better initial guesses, and finally solves the discrete problem by a numerical continuation method. To improve the efficiency of SEM, an interpolated coefficient finite element method \cite{XC} for resolving the difficulty caused by the nonlinear term and a two-grid method based on the finite element discretization \cite{LXC2009} are applied. Zhang et al. \cite{ZZY} proposed the EEM, which originates from a similar idea of SEM to approximate the solutions with the linear combination of eigenfunctions of the Laplacian. It is illustrated that, when the number of eigenfunctions is sufficiently large, there exists a unique numerical solution exponentially converging to a specified true solution. Recently, Wang et al. \cite{WHL2018SISC} developed the two-level approach based on spectral methods and homotopy continuation, which first solves the nonlinear problems using spectral discretization with small degree-of-freedom and then solves the corresponding linearized problems by using spectral discretization with large degree-of-freedom.

The traditional SEM \cite{CX,XC,CX3,LXC2009} is based on the finite element strategy (FESEM), which is very convenient to solve for problems in complex geometries. It is well known that spectral methods are popular for solving problems in regular domains (e.g., rectangles, circles, cylinders, or spheres) and can provide superior accuracy so long as the true solution possesses good regularities \cite{Shen2011}. It is easy to see that, under the assumptions
\begin{enumerate}[(A3)]
  \item[${\bf(A_1)}$] $f\in C^k(\mathbb{R},\mathbb{R})$ with suitable $k\geq 4$; and
  \item[${\bf(A_2)}$] there exist constants $C_1,C_2>0$ such that $|f(\xi)|\leq C_1+C_2|\xi|^p$, where $1<p<\infty$,
\end{enumerate}
any weak solution of \eqref{HE1} belongs to $H^{k+2}(\Omega)\cap H_0^1(\Omega)$ \cite{David}. It is natural to design a spectral discretization in order to improve the efficiency and accuracy of SEM in solving \eqref{HE1} for multiple solutions. In fact, as the numerical observations in \cite{CX,XC,LZ1,XYZ12}, the solutions to \eqref{HE1} are often multi-peak and may possess boundary and/or interior layers, especially for those with high Morse indices. Thus the high accuracy of discretization is no doubt very important to effectively simulate these solutions. Li and Wang \cite{LW} proposed an effective pseudo-spectral discretization for computing multiple solutions of the Lane-Emden equation based on the bifurcation method \cite{ZHY}, but the error estimate of this method is still open to be verified. In addition, Wang et al.  \cite{WHL2018SISC} provided the error estimates of their two-level spectral method for a class of one-dimensional semilinear problems.

In this paper, we are aimed to design an interpolated coefficient spectral-Galerkin-type SEM (SGSEM) for finding multiple solutions of \eqref{HE1} and focus on the following two aspects:
\begin{enumerate}[(1)]
  \item Apply the Legendre-Galerkin spectral method to discretize \eqref{HE1} after a reasonable initial guess is obtained by the search and extension steps of the SEM \cite{CX,XC,CX3,LXC2009}. Specifically, by taking the Lagrange basis polynomials associated with the Legendre-Gauss-Lobatto (LGL) points as the basis of the solution space, an interpolated coefficient approach is implemented in the weak form of numerical solutions to obtain a simple and easy-to-solve nonlinear algebraic system. Compared with the FESEM, the SGSEM combining with the interpolated coefficient approach only takes a small degree-of-freedom to discretize the model problem, and leads to a small nonlinear algebraic system due to its spectral convergence as mentioned in the following. As a result, the SGSEM reduces the computational cost dramatically.
  \item Verify the existence, uniqueness and spectral convergence for the SGSEM numerical solution with respect to the corresponding true solution of \eqref{HE1}. It is really challenging to do this due to the nonlinearity of the model problem, and the multiplicity of solutions, the non-coercivity of the bilinear form
  $$
    \langle\mathcal{L}'(u)w,v\rangle=\int_{\Omega}\nabla w\cdot\nabla v\mathrm{d}\x -\int_{\Omega}f'(u)wv\mathrm{d}\x,\quad w,v\in H_0^1(\Omega),
  $$
  for each solution $u$ with Morse index $\mathrm{MI}\geq1$. Moreover, owing to the employment of the interpolated coefficient approach, new difficulties arise in the convergence analysis of the SGSEM. In this paper, the Schauder's fixed point theorem and other technical strategies will be used.
\end{enumerate}

The rest of this paper is organized as follows. In Sect.~\ref{sec:2}, we introduce an interpolated coefficient Legendre-Galerkin spectral discretization and propose the SGSEM to solve \eqref{HE1} for multiple solutions. In Sect.~\ref{sec:3}, we establish the existence, uniqueness and spectral convergence for the SGSEM numerical solution with respect to the corresponding true solution. In Sect.~\ref{sec:4}, numerical results for semilinear problems with cubic and triangular nonlinearities are presented to illustrate the efficiency of our approach and show the properties of multiple solutions. The conclusions are drawn in Sect.~\ref{sec:5}.

\section{An efficient spectral-Galerkin-type search extension method}
\label{sec:2}
As mentioned above, the boundary value problem \eqref{HE1} has multiple solutions when $f$ satisfies certain regularity and growth assumptions. In order to inherit the advantage of the traditional SEM \cite{CX,XC,CX3,LXC2009} in constructing effective initial data for multiple solutions and improve its efficiency, we modify the SEM by discretizing \eqref{HE1} with an efficient spectral-Galerkin method.

Without loss of generality, we assume that $\Omega=(-1,1)^2$ and $f(0)=0$. The weak formulation for \eqref{HE1} reads as follows: find $u\in H_0^1(\Omega)$ such that
\begin{equation}\label{eq:weak}
  \mathcal{A}(u,v)-\big(f(u),v \big)=0, \quad\forall v\in H_0^1(\Omega),
\end{equation}
where $\mathcal{A}(u,v)=(\nabla u,\nabla v)$ and $(\cdot,\cdot)$ is the inner product in $L^2(\Omega)$. Consider the approximation space
$$
  X_N=P_N^0\otimes P_N^0,\quad P_N^0=\{v\in P_N:v(\pm1)=0\},
$$
for some positive integer $N$, where $P_N$ denotes the set of all polynomials defined on $[-1,1]$ with degree less than or equal to $N$. The standard Galerkin approximation for \eqref{HE1} is to find $u_N\in X_N$ such that
\begin{equation}\label{eq:standardGalerkin}
  \mathcal{A}(u_N,v_N)-\big(f(u_N),v_N\big)=0, \quad\forall v_N\in X_N.
\end{equation}
This leads to a nonlinear algebraic system which is often solved by the Newton method.

Due to the presence of nonlinear term, the Jacobian associated with \eqref{eq:standardGalerkin} has to be updated repeatedly during the Newton iterations. Thus, the discretization \eqref{eq:standardGalerkin} usually leads to a very time-consuming and expensive computation for the Newton method. To overcome this drawback, inspired by the interpolated coefficient finite element method (see, e.g., \cite{XC,CLZ1989}) which was first proposed by Zlamal \cite{Zlamal1980} for semilinear parabolic problems, we modify the scheme \eqref{eq:standardGalerkin} by replacing $f(u_N)$ with an appropriate interpolation of it so that the Jacobian can be updated with a very small computational effort. The resulting discretization scheme is called an interpolated coefficient spectral Galerkin method in the present paper.

\subsection{Interpolated coefficient Legendre-Galerkin spectral method}
\label{sec:2.1}
Let $\{h_j\}_{j=0}^N$ be the Lagrange basis polynomials (nodal bases) associated with the Legendre-Gauss-Lobatto (LGL) points $\{\xi_j\}_{j=0}^{N}$ (i.e., $\xi_0=-1$, $\xi_N=1$, and $\{\xi_j\}_{j=1}^{N-1}$ are $N-1$ zeros of the first-order derivative of the Legendre polynomial of degree $N$). Clearly, the two-dimensional tensorized LGL points and the associated nodal bases are respectively given by
\begin{equation}\label{eq:2DLGLp}
  \x_{jk}=(\xi_j,\xi_k),\quad h_{jk}(\x)=h_j(x)h_k(y),\quad
  \x=(x,y)\in\bar{\Omega},\quad j,k=0,1,\cdots,N.
\end{equation}

The proposed interpolated coefficient Legendre-Galerkin spectral method for the boundary value problem \eqref{HE1} is aimed to find $u_N\in X_N$ such that
\begin{equation}\label{eq:Galerkin}
  \mathcal{A}(u_N,v_N)-\big(\mathrm{I}_Nf(u_N),v_N\big)=0, \quad\forall v_N\in X_N,
\end{equation}
where $\mathrm{I}_N:C(\bar{\Omega})\to P_N\otimes P_N$ is the Lagrange interpolation operator such that for all $u\in C(\bar{\Omega})$,
$$
  \big(\mathrm{I}_Nu \big)(\x_{jk})= u(\x_{jk}),\quad j,k=0,1,\cdots,N.
$$
Denote $U_{jk}=u_N(\x_{jk})$ ($j,k=0,1,\cdots,N$), then $U_{j0}=U_{jN}=U_{0k}=U_{Nk}=0$, $j,k=0,1,\cdots,N$. Since $f(0)=0$, we have
$$
  \big(\mathrm{I}_Nf(u_N)\big)(\x)=\sum_{j,k=1}^{N-1}f(U_{jk})h_{jk}(\x).
$$
It is noted that the key idea of the above interpolated coefficient method is interpolating the whole term $f(u_N)$ based on the Lagrange interpolation points. As a result, $f(U_{jk})$  $\left(j,k=0,1,\cdots,N\right)$ turn to be the interpolation coefficients and the nonlinear algebraic system obtained from the weak formulation (\ref{eq:Galerkin}) is much simpler than that of \eqref{eq:standardGalerkin}. Of course, the interpolated coefficient method can also be implemented based on other types of interpolation basis functions, e.g., Fourier interpolation basis functions. For these cases, the interpolation coefficients are generally no longer $f(U_{jk})$, but a simple form of nonlinear algebraic system can still be obtained. The details are omitted here.

Since $u_N\in X_N$, it can be written as
\begin{equation}\label{eq:u-interp}
  u_N(\x)=\sum_{j,k=1}^{N-1}U_{jk}h_{jk}(\x)=\sum_{j,k=1}^{N-1}U_{jk}h_j(x)h_k(y).
\end{equation}
Plugging \eqref{eq:u-interp} into \eqref{eq:Galerkin} and taking $v_N=h_{lm}$ ($l,m=1,2,\cdots,N-1$) leads to  the nonlinear algebraic system
\begin{equation}\label{eq:nonlsys}
  \mathbf{AUB} + \mathbf{BUA} - \mathbf{BF(U)B} = \mathbf{O},
\end{equation}
with
\begin{align*}
  &\mathbf{U}=(U_{jk})_{j,k=1,2,\cdots,N-1},\quad \mathbf{F(U)}=\Big(f(U_{jk})\Big)_{j,k=1,2,\cdots,N-1}, \\
  &\mathbf{A}=(A_{jk})_{j,k=1,2,\cdots,N-1},\quad A_{jk} = \int_{-1}^1h_k'(x)h_j'(x)\mathrm{d}x, \\
  &\mathbf{B}=(B_{jk})_{j,k=1,2,\cdots,N-1},\quad B_{jk} = \int_{-1}^1h_k(x)h_j(x)\mathrm{d}x.
\end{align*}
Eq. \eqref{eq:nonlsys} can also be rewritten in the following form
\begin{equation}\label{eq:nonlsys-tensor}
 \mathbf{K}\mathbf{u} - \mathbf{M}\mathbf{f}\big(\mathbf{u}\big) = \mathbf{0},
\end{equation}
where $\mathbf{K}=\mathbf{B}\otimes\mathbf{A} + \mathbf{A}\otimes\mathbf{B}$, $\mathbf{M}=\mathbf{B}\otimes\mathbf{B}$, $\mathbf{u}$ and $\mathbf{f}(\mathbf{u})$ are vectors of length $(N-1)^2$ formed by the columns of $\mathbf{U}$ and $\mathbf{F(U)}$ respectively, i.e.,
\begin{align}
  \mathbf{u} &= \big(U_{11},U_{21},\cdots,U_{q1},U_{12},\cdots,U_{q2},\cdots,U_{q1},\cdots,U_{qq}\big)^{\top}, \label{eq:vecU}\\
  \mathbf{f}(\mathbf{u}) &= \big(f(U_{11}),f(U_{21}),\cdots,f(U_{q1}), f(U_{12}),\cdots,f(U_{q2}), \cdots, f(U_{q1}),\cdots,f(U_{qq})\big)^{\top},
\end{align}
and $\otimes$ denotes the Kronecker product operator, i.e., $\mathbf{A}\otimes\mathbf{B}=(A_{ij}\mathbf{B})_{i,j=1,2,\cdots,q^2}$ with $q=N-1$. Obviously, the Jacobian corresponding to the nonlinear algebraic system  \eqref{eq:nonlsys-tensor} is
\begin{equation}\label{eq:Jacobian}
  \mathbf{J}(\mathbf{u}) = \mathbf{K} - \mathbf{M}\mathbf{D}_f(\mathbf{u}),
\end{equation}
with
\begin{equation}\label{eq:dfu}
  \mathbf{D}_f(\mathbf{u})=\diag\big(f'(U_{11}),\cdots,f'(U_{q1}), f'(U_{12}),\cdots,f'(U_{q2}), \cdots, f'(U_{q1}),\cdots,f'(U_{qq})\big).
\end{equation}
Thanks to the implementation of the interpolated coefficient method, the expression of the Jacobian (\ref{eq:Jacobian}) is much simple, with $\mathbf{K}$ and $\mathbf{M}$ two constant matrices and only a diagonal matrix $\mathbf{D}_f(\mathbf{u})$ needing to be updated during each Newton iteration. As a matter of fact, it is the advantage of the interpolated coefficient method for discretizing nonlinear problems.

\begin{remark}
By using tensor product, it is straightforward to generalize the above interpolated coefficient Legendre-Galerkin spectral method to higher dimensional cases.
\end{remark}

\subsection{The algorithm for the SGSEM}
\label{sec:2.2}

To ensure the convergence of the algorithm, it is crucial to construct good initial guesses for the solutions of the discrete problem \eqref{eq:Galerkin}. Motivated by the traditional SEM, we propose a spectral-Galerkin-type SEM (SGSEM) based on the discretization \eqref{eq:Galerkin}. In the method, the initial guesses are constructed based on the linear combinations of several eigenfunctions of the corresponding linear eigenvalue problem, and the discrete problem \eqref{eq:Galerkin} is then solved by a numerical continuation method for each initial guess.

Following the original SEM \cite{CX} and its improved algorithms \cite{XC}, we present the outline of the interpolated coefficient SGSEM for finding multiple solutions of \eqref{HE1} below.
\begin{enumerate}[ Step~1.]
  \item Compute eigenpairs of $-\Delta$ in $\Omega$ with the homogeneous Dirichlet boundary conditions.
  \item As much as possible, search for all solutions of the Galerkin approximation of \eqref{HE1} on an appropriately small subspace spanned by some eigenfunctions obtained in Step~1. Choose one solution as a rough initial guess.
  \item Extend the small subspace in Step~2 to a relatively larger subspace to update the initial guess.
  \item Discretize \eqref{HE1} by the interpolated coefficient Legendre-Galerkin spectral method to obtain the nonlinear algebraic system \eqref{eq:nonlsys-tensor}.
  \item Solve \eqref{eq:nonlsys-tensor} by a numerical continuation method based on the Newton approach with the initial guess obtained in Step~2-3.
\end{enumerate}
Notice that Step~2 is a key stage for separating various solutions and determining their rough positions, whereas Step~3 can be omitted if the initial guess constructed in Step~2 is already good enough. Compared with the previously mentioned two-level method based on spectral discretization \cite{WHL2018SISC}, the SGSEM selectively uses the eigenfunctions in Steps~2-3 to obtain initial guesses of multiple solutions according to certain rules. Moreover, in the SGSEM, the interpolated coefficient Legendre-Galerkin spectral method is used to discretize the model problem to obtain a simple nonlinear algebraic system. Therefore, the two methods are very different in the construction of the initial guesses and the spectral discretization.

\section{Existence, uniqueness and spectral convergence for the solution of SGSEM}
\label{sec:3}
In this section, we consider the existence, uniqueness and spectral convergence for the numerical solution of the interpolated coefficient SGSEM with respect to each specified true solution of \eqref{HE1}. Hereafter, we denote any positive constant by $C$ unless it is specified.

\subsection{Preliminaries}
\label{sec:3.1}
First, we introduce some function spaces. For $p\in[1,\infty]$, let
$$
  L^p(\Omega)=\big\{v:\mbox{ $v$ is measurable on $\Omega$ and }\|v\|_{L^p(\Omega)}<\infty\big\},
$$
where
$$
\|v\|_{L^p(\Omega)}=\left(\int_{\Omega}|v(\x)|^p\mathrm{d}\x\right)^{1/p},\quad 1\leq p<\infty,\qquad
\|v\|_{L^\infty(\Omega)}=\mathrm{ess}\sup_{\x\in\Omega}|v(\x)|.
$$
When $p=2$, the space $L^2(\Omega)$ is a Hilbert space equipped with the inner product
$$
(u,v)_{L^2(\Omega)}=\int_{\Omega}u(\x)v(\x)\mathrm{d}\x.
$$
For simplicity, we use $(\cdot,\cdot)$ and $\|\cdot\|_0$ to denote $(\cdot,\cdot)_{L^2(\Omega)}$ and $\|\cdot\|_{L^2(\Omega)}$, respectively. For $m\in\mathbb{N}_+$, set the Sobolev space
$$
H^m(\Omega)=\big\{v\in L^2(\Omega): D^{\alpha}v\in L^2(\Omega), \forall\alpha\in\mathbb{N}^2,|\alpha|\leq m \big \},
$$
equipped with the inner product and norm
$$
(u,v)_m=\int_{\Omega}\sum_{|\alpha|\leq m}D^{\alpha}u(\x)D^{\alpha}v(\x)\mathrm{d}\x,\quad
\|v\|_m=\sqrt{(v,v)_m}.
$$
The space $H_0^m(\Omega)$ is defined as the closure of $C_0^{\infty}(\Omega)$ in $H^m(\Omega)$. The dual space of $H_0^m(\Omega)$ is denoted by $H^{-m}(\Omega)$ with the norm
$$
\|L\|_{-m}=\sup_{0\neq v\in H_0^m(\Omega)}\frac{\langle L,v\rangle}{\|v\|_m}, \quad \quad  \forall L\in H^{-m}(\Omega),
$$
where $\langle\cdot,\cdot\rangle$ is the duality pairing between $H^{-m}(\Omega)$ and $H_0^m(\Omega)$. For any real number $s>0$ that is not an integer, define $H^s(\Omega)=\big\{v\in H^{\lfloor s\rfloor}(\Omega): \|v\|_s<\infty \big\}$ with the norm \cite{Adams1975}
$$
  \|v\|_s = \left(\|v\|_{\lfloor s\rfloor}^2 + \sum_{|\alpha|=\lfloor s\rfloor} \int_{\Omega}\int_{\Omega}\frac{|D^{\alpha}v(\x)-D^{\alpha}v(\y)|^2}{|\x-\y|^{2\beta+2}}\mathrm{d}\x\mathrm{d}\y \right)^{1/2}, \quad  \beta=s-\lfloor s\rfloor.
$$
In addition, the interpolation space of $H^r(\Omega)$ and $H^s(\Omega)$ $(r\geq s\geq0)$ is denoted as $\big[H^r(\Omega),H^s(\Omega)\big]_{\tau}$ ($0<\tau<1$), equipped with the so-called graph norm \cite{Lions-Magenes}. Actually, it is equivalent to a Sobolev space presented in the following lemma (see Theorem 1.5 and Theorem 1.6 in \cite{CB2}), which plays an important role in proving our main theorems.
\begin{lemma}\label{LM-interopation-sp}
For any nonnegative numbers $r$ and $s$ with $r\geq s$, and any $\tau \in(0,1)$, it holds that
\begin{equation}\label{eq:interopation-sp}
\big[H^r(\Omega),H^s(\Omega)\big]_{\tau}=H^{(1-\tau)r+\tau s}(\Omega),
\end{equation}
in the sense of equivalent norms. Further, for all $v\in H^r(\Omega)$, it yields
\begin{equation}\label{eq:interopation-norm}
\|v\|_{[H^r(\Omega),H^s(\Omega)]_{\tau}} \leq \|v\|_r^{1-\tau}\|v\|_{s}^{\tau}.
\end{equation}
\end{lemma}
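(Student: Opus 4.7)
The plan is to realize both parts of the lemma through the standard Fourier/complex-interpolation framework, reducing the problem on the square $\Omega=(-1,1)^2$ to the whole space $\mathbb{R}^2$ where Sobolev norms admit a clean multiplier description. First I would construct a universal Stein-type total extension operator $E:L^2(\Omega)\to L^2(\mathbb{R}^2)$ that restricts to a bounded map $H^t(\Omega)\to H^t(\mathbb{R}^2)$ for every real $t\in[0,\max(r,s)]$, together with the obvious restriction $R:H^t(\mathbb{R}^2)\to H^t(\Omega)$ satisfying $R\circ E=\mathrm{Id}$. This lets me transfer the interpolation identity: since interpolation functors commute with retractions, it suffices to prove the result on $\mathbb{R}^2$ and then pull it back to $\Omega$. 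The equivalence of norms in \eqref{eq:interopation-sp} and the sharp constant in \eqref{eq:interopation-norm} are obtained, respectively, from the retraction and from the definition of the graph norm on the interpolated pair.

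Next, on $\mathbb{R}^2$ I would identify $H^t(\mathbb{R}^2)$ with the weighted space $L^2_{w_t}(\mathbb{R}^2)$, $w_t(\xi)=(1+|\xi|^2)^{t}$, via the Fourier transform. This turns the pair $(H^r(\mathbb{R}^2),H^s(\mathbb{R}^2))$ into $(L^2(w_r),L^2(w_s))$, to which I would apply the Stein–Weiss theorem on complex interpolation of weighted $L^2$ spaces. That theorem yields, with equality of norms,
\[
\big[L^2(w_r),L^2(w_s)\big]_{\tau}=L^2\big(w_r^{1-\tau}w_s^{\tau}\big)=L^2\big(w_{(1-\tau)r+\tau s}\big),
\]
which after inverse Fourier transform is exactly $H^{(1-\tau)r+\tau s}(\mathbb{R}^2)$. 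Combined with the first paragraph, this settles \eqref{eq:interopation-sp}.

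For the interpolation inequality \eqref{eq:interopation-norm}, I would invoke the classical Hadamard three-lines theorem. Given $v\in H^r(\Omega)$, extend it by $Ev$, construct the analytic family $F(z)=(1+|\xi|^2)^{((1-z)r+zs)/2}\,\widehat{Ev}(\xi)$ on the strip $0\le \mathrm{Re}\,z\le 1$, and observe that $\|F(\mathrm{i}y)\|_{L^2}\le \|Ev\|_{H^r}\le C\|v\|_r$ while $\|F(1+\mathrm{i}y)\|_{L^2}\le C\|v\|_s$. The three-lines lemma then yields $\|F(\tau)\|_{L^2}\le C\|v\|_r^{1-\tau}\|v\|_s^{\tau}$, which is precisely the desired bound on the graph norm of the interpolation pair, modulo absorbing constants into the equivalence of norms already established.

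The main obstacle I expect is not the Fourier/interpolation step on $\mathbb{R}^2$, which is completely standard, but producing a single extension operator that is simultaneously bounded on $H^r(\Omega)$ and $H^s(\Omega)$ with the correct interpolation-theoretic compatibility. On a smooth rectangle this can be done either by an iterated reflection across each edge or, more robustly, by Stein's universal extension; either construction guarantees that $E$ is a morphism of Banach couples $(H^r(\Omega),H^s(\Omega))\to(H^r(\mathbb{R}^2),H^s(\mathbb{R}^2))$, and hence by the interpolation functor an isomorphism onto a retract of the corresponding interpolated space. Once this is secured, the remaining steps reduce to direct references to well-documented facts about complex interpolation of weighted $L^2$ spaces and the three-lines theorem.
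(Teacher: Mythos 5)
The paper does not actually prove this lemma: it is quoted verbatim from the literature (Theorems 1.5 and 1.6 of Bernardi--Maday, with the graph norm taken from Lions--Magenes), so there is no internal argument to compare against. Your proposal supplies the standard proof of the cited result, and it is essentially sound: a Stein-type universal extension operator on the (Lipschitz) square makes $\bigl(H^r(\Omega),H^s(\Omega)\bigr)$ a retract of $\bigl(H^r(\mathbb{R}^2),H^s(\mathbb{R}^2)\bigr)$, interpolation functors respect retracts, and on $\mathbb{R}^2$ the Fourier transform reduces the couple to weighted $L^2$ spaces where Stein--Weiss gives $\bigl[L^2(w_r),L^2(w_s)\bigr]_{\tau}=L^2(w_{(1-\tau)r+\tau s})$ with equal norms. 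You are right that the only delicate point is the simultaneous boundedness of $E$ on both endpoint spaces, and that Stein's construction (rather than order-limited reflections at the corners) is the robust choice. One caveat: the interpolation space in the paper is the Lions--Magenes (spectral/Hilbert-couple) one, while you use the complex method; these coincide for Hilbert couples, but that identification deserves a sentence. A second, minor point: your three-lines argument for \eqref{eq:interopation-norm} passes through the extension operator and therefore only yields $\|v\|_{[H^r,H^s]_{\tau}}\leq C\|v\|_r^{1-\tau}\|v\|_s^{\tau}$, whereas the stated inequality carries no constant. The clean way to get it is to work abstractly on the couple itself, with no extension: for $v\in H^r(\Omega)$ take the admissible analytic function
\begin{equation*}
F(z)=\left(\frac{\|v\|_{r}}{\|v\|_{s}}\right)^{z-\tau}v,
\end{equation*}
whose boundary norms on both lines equal $\|v\|_r^{1-\tau}\|v\|_s^{\tau}$, giving the inequality with constant one directly from the definition of the interpolation norm (an analogous one-line spectral-theorem argument works for the Lions--Magenes norm). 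Since the paper only ever uses \eqref{eq:interopation-norm} up to generic constants, your version is adequate for every application in Sections 3.2--3.3, but the sharper statement costs nothing.
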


To discuss the existence, uniqueness and spectral convergence of the numerical solutions corresponding to specified true solutions of \eqref{HE1}, the basic assumptions ${\bf(A_1)}$, ${\bf(A_2)}$, and the following additional assumption are needed:
\begin{enumerate}[(A3)]
	\item[${\bf(A_3)}$] the solution $u\in H^1_0(\Omega)$ of \eqref{HE1} is nonsingular, i.e., $\mathcal{L}'(u):H_{0}^1(\Omega)\rightarrow H^{-1}(\Omega)$ is an isomorphism.
\end{enumerate}
The assumption ${\bf(A_3)}$ is aimed to exclude the limit points and bifurcation points \cite{BR,XZ}, which guarantees that $u$ must be an isolated solution. An application of the open mapping theorem to ${\bf(A_3)}$ yields
\begin{equation}\label{iso}
\|v\|_1\leq C\big\| \mathcal{L}'(u)v\big\|_{-1},\quad\forall v\in H_0^{1}(\Omega).
\end{equation}

For the future development, let's define an orthogonal projection $\mathrm{\Pi}_N^{1,0}: H_0^1(\Omega)\rightarrow {X_N}$ such that
\begin{equation}\label{eq:h1proj}
  \mathcal{A}\big(u-\mathrm{\Pi}_N^{1,0}u, v_N\big)=0, \quad \forall v_N\in {X_N},
\end{equation}
where $\mathcal{A}(u,v)=(\nabla u,\nabla v)$ is obviously a $H_0^1(\Omega)$-coercive bilinear form, i.e., there exists a constant $\gamma^\ast>0$ such that
\begin{equation}\label{eq:A-coercivity}
  \mathcal{A}(v,v)\geq \gamma^\ast\|v\|_1^2,\quad\forall v\in H_0^1(\Omega).
\end{equation}
The basic approximation result is stated as follows (see, e.g., Theorem 3.31 in \cite{xxm}).
\begin{lemma}\label{LM:PI}
For any $u\in H^m(\Omega)\cap H_0^1(\Omega)$ with $m\geq1$, we have
$$
  \big\|u-\mathrm{\Pi}_N^{1,0}u\big\|_r\leq c_1N^{r-m}\|u\|_m,\quad 0\leq r\leq1.
$$
\end{lemma}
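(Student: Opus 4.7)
The plan is to establish the estimate first at the two endpoints $r=1$ and $r=0$, and then recover the intermediate range by interpolation. For $r=1$, since $\mathrm{\Pi}_N^{1,0}$ is by definition the $\mathcal{A}$-orthogonal projection onto $X_N$ and $\mathcal{A}$ is $H_0^1$-coercive by \eqref{eq:A-coercivity}, C\'ea's lemma yields
\[
  \|u - \mathrm{\Pi}_N^{1,0} u\|_1 \leq C \inf_{v_N \in X_N} \|u - v_N\|_1 .
\]
To bound this infimum I would invoke the standard one-dimensional $H_0^1$-best approximation on $(-1,1)$ based on expansions in integrated Legendre polynomials, which for $v \in H^m(-1,1) \cap H_0^1(-1,1)$ gives an $O(N^{1-m})$ rate, and then tensorize: a partial projection in $x$ together with the analogous projection in $y$ produces a polynomial $v_N \in P_N^0 \otimes P_N^0$ obeying the homogeneous boundary condition with $\|u - v_N\|_1 \leq C N^{1-m}\|u\|_m$.

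For $r=0$ I would use an Aubin–Nitsche duality argument. Given $g \in L^2(\Omega)$, let $w \in H_0^1(\Omega)$ solve $\mathcal{A}(v,w) = (v,g)$ for all $v \in H_0^1(\Omega)$. Because $\Omega = (-1,1)^2$ is a convex polygon, elliptic regularity gives $w \in H^2(\Omega) \cap H_0^1(\Omega)$ with $\|w\|_2 \leq C \|g\|_0$. The Galerkin orthogonality \eqref{eq:h1proj}, taken with $v_N = \mathrm{\Pi}_N^{1,0} w$, then yields
\[
  (u - \mathrm{\Pi}_N^{1,0} u,\, g) = \mathcal{A}\bigl(u - \mathrm{\Pi}_N^{1,0} u,\, w - \mathrm{\Pi}_N^{1,0} w\bigr) \leq C\, \|u - \mathrm{\Pi}_N^{1,0} u\|_1 \, \|w - \mathrm{\Pi}_N^{1,0} w\|_1 .
\]
Applying the $r=1$ bound to each factor gives $|(u - \mathrm{\Pi}_N^{1,0} u, g)| \leq C N^{-m} \|u\|_m \|g\|_0$, and taking the supremum over $g \in L^2(\Omega)$ produces the $r=0$ estimate.

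For intermediate $0 < r < 1$, I would apply Lemma \ref{LM-interopation-sp} with the interpolation parameter $\tau = 1-r$ (between $H^1$ and $L^2$), giving
\[
  \|u - \mathrm{\Pi}_N^{1,0} u\|_r \leq \|u - \mathrm{\Pi}_N^{1,0} u\|_1^{r} \, \|u - \mathrm{\Pi}_N^{1,0} u\|_0^{1-r} \leq C \bigl(N^{1-m}\|u\|_m\bigr)^{r}\bigl(N^{-m}\|u\|_m\bigr)^{1-r} = C N^{r-m}\|u\|_m ,
\]
which closes the proof.

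The main obstacle will be the $r=1$ best-approximation estimate on the boundary-vanishing tensor-product space $P_N^0 \otimes P_N^0$: one must exhibit a comparison polynomial that simultaneously enforces the homogeneous Dirichlet condition and retains the optimal rate $N^{1-m}$. This is not the naive truncated Legendre series in each variable (which loses the boundary condition) but rather requires a stable one-dimensional $H_0^1$-projection—such as the one built from integrated Legendre polynomials—followed by a careful tensor-product argument controlling the cross terms. Once this cornerstone estimate is in place, the remaining steps (C\'ea, duality, interpolation) are essentially mechanical.
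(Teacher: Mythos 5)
The paper does not actually prove this lemma: it is quoted verbatim as a known approximation result, with a pointer to Theorem 3.31 of \cite{xxm}, so there is no in-paper argument to compare against. Your proposal supplies the standard proof that underlies such citations, and it is correct: quasi-optimality of the $\mathcal{A}$-orthogonal projection from \eqref{eq:A-coercivity} via C\'ea's lemma, the $O(N^{1-m})$ best approximation in $P_N^0\otimes P_N^0$ built by tensorizing the one-dimensional $H_0^1$-orthogonal projection (you are right that this, not truncation of the Legendre series, is the crux, since the comparison polynomial must vanish on $\partial\Omega$; see \cite{CB2,Shen2011} for the one-dimensional estimate and its tensorization), an Aubin--Nitsche duality argument using $H^2$ regularity of the Poisson problem on the convex domain $(-1,1)^2$ for $r=0$, and space interpolation via Lemma~\ref{LM-interopation-sp} with $\tau=1-r$ for the intermediate range. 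Each step checks out, including the use of Galerkin orthogonality \eqref{eq:h1proj} in the duality identity and the application of the already-established $r=1$ case with $m=2$ to the dual solution $w$. The only caveat is that the one-dimensional $H_0^1$-projection estimate is itself left as a black box, but that is exactly the ingredient the paper's cited reference provides, so nothing essential is missing.
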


In fact, one can construct higher order projection operators due to the following general result (see Theorem 7.4 in \cite{CB2}).
\begin{lemma}\label{LM:PI-highorder}
Let $s\geq0$ such that $s-1/2$ is not an integer. There exists an operator $\mathrm{\Pi}_N^{s,0}$ from $H^s(\Omega)\cap H_0^1(\Omega)$ into $X_N$ such that for any $u\in H^m(\Omega)\cap H_0^1(\Omega)$, $m\geq s$, the following estimate holds
$$
\big\|u-\mathrm{\Pi}_N^{s,0}u\big\|_r\leq CN^{r-m}\|u\|_m,\quad 0\leq r\leq s.
$$
\end{lemma}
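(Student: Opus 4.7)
The plan is to build $\mathrm{\Pi}_N^{s,0}$ as a tensor product of one-dimensional projectors and then interpolate between the endpoint cases $r=0$ and $r=s$ via Lemma \ref{LM-interopation-sp}. The restriction that $s-1/2$ is not an integer is precisely what makes the Hilbert-space interpolation identity \eqref{eq:interopation-sp} apply cleanly to the trace-zero subspaces that arise when inserting the boundary condition $u|_{\partial\Omega}=0$.

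First I would recall (or construct) one-dimensional projectors $\pi_N^{s,0}:H^s(-1,1)\cap H_0^1(-1,1)\to P_N^0$ for which the optimal error bound
$$
  \|v-\pi_N^{s,0}v\|_{H^r(-1,1)}\le C\,N^{r-m}\|v\|_{H^m(-1,1)},\qquad 0\le r\le s\le m,
$$
is known (the standard orthogonal projections in the Legendre Hilbert basis, modified near the endpoints to preserve the Dirichlet conditions; these are exactly the constructions given in \cite{CB2}, Ch.~III). The condition on $s-1/2$ is used here to guarantee that the boundary-compatibility spaces form a regular scale under interpolation. Next I would set $\mathrm{\Pi}_N^{s,0}=\pi_N^{s,0}\otimes\pi_N^{s,0}$ on $\Omega=(-1,1)^2$. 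Using a standard slicewise/Fubini argument and the definition of $H^s(\Omega)$ as an anisotropic sum of $L^2$-valued one-dimensional Sobolev seminorms, the one-dimensional estimates tensorize at the two endpoints $r=0$ and $r=s$:
$$
  \|u-\mathrm{\Pi}_N^{s,0}u\|_0\le C\,N^{-m}\|u\|_m,\qquad
  \|u-\mathrm{\Pi}_N^{s,0}u\|_s\le C\,N^{s-m}\|u\|_m.
$$
This is the main technical step: one writes $u-\mathrm{\Pi}_N^{s,0}u = (I-\pi_N^{s,0})\otimes I\,u + \pi_N^{s,0}\otimes (I-\pi_N^{s,0})\,u$, applies the 1D estimate in each variable with the other variable frozen, and integrates.

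With the two endpoint estimates in hand, the intermediate estimate at $0\le r\le s$ follows from Lemma \ref{LM-interopation-sp}. Writing $r=(1-\tau)\cdot 0+\tau\cdot s$ with $\tau=r/s\in[0,1]$ and applying \eqref{eq:interopation-norm} to $w:=u-\mathrm{\Pi}_N^{s,0}u$ gives
$$
  \|w\|_r\le C\,\|w\|_0^{1-\tau}\|w\|_s^{\tau}\le C\bigl(N^{-m}\|u\|_m\bigr)^{1-\tau}\bigl(N^{s-m}\|u\|_m\bigr)^{\tau}=C\,N^{r-m}\|u\|_m,
$$
which is the claimed bound. The degenerate cases $r=0$ and $r=s$ are already covered; the nonintegrality of $s-1/2$ is what licenses the use of \eqref{eq:interopation-sp} in the target space containing the Dirichlet trace condition.

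The main obstacle I anticipate is the careful handling of the boundary condition in the tensor product step: $X_N=P_N^0\otimes P_N^0$ requires $u$ to vanish on all four sides of $\partial\Omega$, and one must verify that $\pi_N^{s,0}\otimes\pi_N^{s,0}$ indeed lands in $X_N$ and still delivers the optimal power of $N$ in the mixed error terms. A secondary subtlety is that the interpolation step above uses the equivalence of the graph norm $\|\cdot\|_{[H^s,H^0]_{\tau}}$ with $\|\cdot\|_r$, which in turn relies on $s-1/2\notin\mathbb{Z}$ to avoid the logarithmic obstructions that appear for the Lions–Magenes spaces at half-integer exponents. Once these two points are clarified, the rest of the argument is a routine combination of one-dimensional spectral approximation theory and Hilbert interpolation.
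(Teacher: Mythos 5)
The paper does not actually prove this lemma: it is quoted verbatim from Theorem~7.4 of \cite{CB2} (Bernardi--Maday), so there is no internal argument to compare against. Your sketch is, in essence, the standard construction that underlies that cited theorem --- one-dimensional projectors compatible with the Dirichlet condition, tensorization, endpoint estimates at $r=0$ and $r=s$, and Hilbert interpolation in between --- so the strategy is sound and your final interpolation computation is correct: with $\tau$ chosen so that $(1-\tau)\cdot 0+\tau s=r$, inequality \eqref{eq:interopation-norm} applied to $w=u-\mathrm{\Pi}_N^{s,0}u$ does yield $N^{-m(1-\tau)}N^{(s-m)\tau}=N^{r-m}$. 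Two caveats. First, the real mathematical content is concentrated in the step you defer: proving the endpoint bound $\|u-\mathrm{\Pi}_N^{s,0}u\|_s\le CN^{s-m}\|u\|_m$ for the tensorized operator. This needs (i) uniform-in-$N$ stability of the one-dimensional projector on $H^s$ with the other variable frozen, and (ii) the identification of $H^s(\Omega)$ on the product domain with the intersection of the anisotropic spaces $H^s_xL^2_y\cap L^2_xH^s_y$ (your phrase ``anisotropic sum of seminorms'' is only correct up to norm equivalence, and for noninteger $s$ this identification itself requires an interpolation argument). Without these two ingredients the decomposition $u-\mathrm{\Pi}_N^{s,0}u=(I-\pi_N^{s,0})\otimes I\,u+\pi_N^{s,0}\otimes(I-\pi_N^{s,0})\,u$ does not close. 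Second, your attribution of the hypothesis $s-1/2\notin\mathbb{Z}$ is slightly misplaced: the final interpolation step is carried out in the plain scale $[H^s(\Omega),L^2(\Omega)]_\tau$ via Lemma~\ref{LM-interopation-sp} and needs no condition on $s$; the restriction is needed earlier, in constructing the one-dimensional boundary-compatible projector and its $H^s$-endpoint estimate, where the spaces $H^s\cap H^1_0$ must form a regular interpolation scale (avoiding the $H^{1/2}_{00}$-type anomaly). As a proof proposal it is acceptable in outline, but a complete proof would either have to carry out the tensorization estimate in detail or, as the paper does, simply invoke \cite{CB2}.
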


Then, for any $u,v\in C(\bar{\Omega})$, define the discrete scalar product and norm
$$
\langle u,v\rangle_N=\sum_{j,k=0}^N u(\x_{jk})v(\x_{jk})\omega_{jk}, \qquad
\|v\|_{N}=\langle v, v\rangle_{N}^{1/2},
$$
where $\x_{jk}$ and $\omega_{jk}$ $(j,k=0,1,\cdots,N)$ are the two-dimensional tensorized LGL quadrature nodes and the corresponding weights, respectively. It is well-known that
$$
\langle u,v\rangle_N=(u,v), \quad \forall uv \in { X_{2N-1}},
$$
and
\begin{equation}\label{eq:norm_0N}
\|v\|_0 \leq \|v\|_{N}\leq c_L \|v\|_0, \quad \forall v\in {X_N},
\end{equation}
with $c_L$ a constant independent of $N$ \cite{CQ,SLW}. By Theorem 14.2 in \cite{CB2}, we have
\begin{lemma}\label{LM:interp}
For all $u\in H^m(\Omega)$ with $m>1+r/2$, it yields
$$
\|u-\mathrm{I}_N u\|_r \leq c_2 N^{r-m}\|u\|_{m},  \quad 0\leq r \leq 1.
$$
\end{lemma}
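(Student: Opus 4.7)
The plan is to establish the estimate by first proving the one-dimensional analog, then lifting to two dimensions through the tensor-product structure of $I_N$ built into \eqref{eq:2DLGLp}, and finally interpolating between integer orders by means of Lemma \ref{LM-interopation-sp} to obtain the fractional case.

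For the one-dimensional analog, I would write the LGL interpolation error as $w - I_N^{1D} w = (w - \pi_N w) - I_N^{1D}(w - \pi_N w)$ for some polynomial projection $\pi_N : L^2(-1,1) \to P_N$ (e.g.\ the truncated Legendre series). The first piece obeys the standard spectral approximation bound $\|w - \pi_N w\|_{H^r(-1,1)} \leq C N^{r-m}\|w\|_{H^m(-1,1)}$, and the second piece is controlled by the one-dimensional analog of the discrete norm equivalence \eqref{eq:norm_0N}, which lets me bound $\|I_N^{1D}\varphi\|_{L^2(-1,1)}$ by a weighted sum of point values that is in turn dominated by $\|\varphi\|_{L^\infty(-1,1)}$; the 1D Sobolev embedding $H^s \hookrightarrow C^0$ for $s>1/2$ then provides the required pointwise control. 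For $r=1$, I would combine the $r=0$ estimate with the usual inverse inequality on $P_N$.

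To pass from 1D to 2D, I would use the tensor decomposition $I_N = I_N^{(x)} \circ I_N^{(y)}$ and write
\[
  u - I_N u = (u - I_N^{(y)} u) + I_N^{(y)}(u - I_N^{(x)} u),
\]
estimating each term by applying the 1D bound in one variable with the other held as a parameter and then integrating (or taking an essential supremum) in the remaining variable via Fubini. The stability of $I_N^{(y)}$ on the second term is handled by the $L^\infty$ Sobolev embedding in that variable, which in 2D costs one extra half derivative compared with the 1D threshold and produces precisely the stated condition $m > 1 + r/2$.

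With the integer-order estimates $r = 0$ and $r = 1$ in hand, I would invoke Lemma \ref{LM-interopation-sp} to identify $H^r(\Omega)$ with the interpolation space $[H^1(\Omega),L^2(\Omega)]_{1-r}$ and apply \eqref{eq:interopation-norm} to $u - I_N u$. The main obstacle I expect is matching the sharp regularity threshold $m > 1 + r/2$ for non-integer $r$: naive space interpolation between the endpoints $r=0$ (needing $m>1$) and $r=1$ (needing $m>3/2$) only yields the uniform condition $m > 3/2$, so recovering the sharp exponent for $0 < r < 1$ forces one to argue directly through a fractional-order polynomial projection akin to $\Pi_N^{s,0}$ in Lemma \ref{LM:PI-highorder}, or to refine the $L^\infty$ embedding step to track the regularity more carefully, as in Theorem~14.2 of \cite{CB2}.
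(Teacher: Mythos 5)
The paper offers no proof of this lemma at all --- it is imported verbatim from Theorem 14.2 of \cite{CB2} --- so there is no internal argument to compare yours against; the question is only whether your sketch would actually establish the stated estimate. It would not, because of a quantitative gap at the very first step. Bounding $\|\mathrm{I}_N^{1D}(w-\pi_N w)\|_{L^2(-1,1)}$ through the discrete norm (the one-dimensional analogue of \eqref{eq:norm_0N}) and then through $\|w-\pi_N w\|_{L^{\infty}(-1,1)}\leq C\|w-\pi_N w\|_{H^{s}(-1,1)}$ with $s>1/2$ yields only $CN^{s-m}\|w\|_{m}$, i.e., a rate $N^{1/2+\varepsilon-m}$ rather than the optimal $N^{-m}$; this half-power loss cannot be removed by pushing $s$ toward $1/2$. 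The standard proofs avoid it by replacing the $L^{\infty}$-stability of the interpolant with a genuinely $L^2$-based stability bound of the form $\|\mathrm{I}_N v\|_{L^2(-1,1)}\leq C\bigl(\|v\|_{L^2(-1,1)}+N^{-1}\|v'\|_{L^2(-1,1)}\bigr)$, applied to $v=w-\pi_N w$ with a projection $\pi_N$ that approximates simultaneously in $L^2$ and $H^1$; that is the missing idea, and without it the $r=0$ endpoint of the lemma is not recovered. The same loss propagates through your tensorization step, where $L^{\infty}$-stability of $\mathrm{I}_N^{(y)}$ is again invoked.

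Beyond that, you candidly concede that the fractional case $0<r<1$ with the sharp threshold $m>1+r/2$ does not follow from interpolating your two endpoint estimates (which would impose $m>3/2$ uniformly), and you resolve this by deferring to Theorem 14.2 of \cite{CB2} --- which is precisely, and only, what the paper does. As written, the proposal is therefore an outline whose two hardest steps (optimal $L^2$ stability of the LGL interpolant and the sharp regularity threshold for non-integer $r$) are left to the reference rather than proved.
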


Further, we list an inverse estimate for polynomials in $X_N$ \cite{BM}.
\begin{lemma} \label{LM-inverse}
For any real number $\mu$ and integer $r$ with $0\leq r\leq \mu$, it is satisfied for any $\varphi\in {X_N}$ that
\begin{equation}\label{inverse}
\|\varphi\|_{\mu}\leq CN^{2(\mu-r)}\|\varphi\|_r.
\end{equation}
\end{lemma}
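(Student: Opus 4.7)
The plan is to establish the inverse estimate in three stages: first the one-dimensional analogue for integer orders, then an extension to the tensor-product space $X_N$ via Fubini, and finally interpolation in the (possibly non-integer) parameter $\mu$ using Lemma~\ref{LM-interopation-sp}.

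For the one-dimensional base case, I would prove that for every $\psi\in P_N$ and integer $k\geq 0$,
\begin{equation*}
  \|\psi^{(k+1)}\|_{L^2(-1,1)} \leq C N^2 \|\psi^{(k)}\|_{L^2(-1,1)}.
\end{equation*}
Expanding $\psi^{(k)}$ in the Legendre basis $\{L_n\}_{n\leq N}$, applying the classical identity that expresses $L_n'$ as a linear combination of Legendre polynomials of degree strictly less than $n$ with coefficients of size $O(n)$, and invoking $L^2$-orthogonality of $\{L_n\}$ on $(-1,1)$, yields the $L^2$-type Markov inequality with the factor $N^2$. Iterating the one-step bound produces $\|\psi^{(\mu)}\|_{L^2(-1,1)}\leq CN^{2(\mu-r)}\|\psi^{(r)}\|_{L^2(-1,1)}$ for all integers $\mu\geq r\geq 0$.

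To extend to two dimensions, for integer $\mu$ and any multi-index $\alpha$ with $|\alpha|\leq\mu$, Fubini together with the one-dimensional inequality applied in each variable separately gives $\|D^\alpha\varphi\|_0\leq CN^{2(\mu-r)}\|\varphi\|_r$ (the maximum over $|\alpha|\leq\mu$ occurs at $|\alpha|=\mu$). Summing over $|\alpha|\leq\mu$ produces $\|\varphi\|_\mu\leq CN^{2(\mu-r)}\|\varphi\|_r$ for integer $\mu\geq r\geq 0$. For real $\mu$, set $m=\lfloor\mu\rfloor$ and $\tau=m+1-\mu\in[0,1)$, so that $\mu=(1-\tau)(m+1)+\tau m$; Lemma~\ref{LM-interopation-sp} combined with \eqref{eq:interopation-norm} then yields
\begin{equation*}
  \|\varphi\|_\mu \leq C\|\varphi\|_{m+1}^{\,1-\tau}\,\|\varphi\|_m^{\,\tau} \leq CN^{2(m+1-r)(1-\tau)+2(m-r)\tau}\|\varphi\|_r = CN^{2(\mu-r)}\|\varphi\|_r,
\end{equation*}
which is the claim.

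The main technical obstacle I foresee is the one-dimensional base step: to secure the sharp power $N^2$ one really needs the explicit Legendre-coefficient computation above, because a generic norm-equivalence argument on the finite-dimensional space $P_N$ would not expose the correct $N$-dependence and would propagate into an incorrect exponent after tensorization. Once the 1D inequality is in hand with the sharp constant, the remaining steps--tensorization via Fubini and lifting to non-integer $\mu$ by interpolation--are essentially bookkeeping, making only routine use of Lemma~\ref{LM-interopation-sp}.
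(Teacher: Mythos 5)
Your proof is correct. Note, however, that the paper does not prove Lemma~\ref{LM-inverse} at all: it is stated as a known inverse estimate and attributed to Bernardi and Maday \cite{BM}, so there is no in-paper argument to compare against. Your three-stage reconstruction is essentially the standard proof from that literature: the one-dimensional $L^2$ Markov-type bound $\|\psi'\|_{L^2(-1,1)}\leq CN^2\|\psi\|_{L^2(-1,1)}$ via the identity $L_n'=\sum_{k<n,\ k+n\ \mathrm{odd}}(2k+1)L_k$ together with Cauchy--Schwarz over the Legendre coefficients does yield the sharp $N^2$ (a crude finite-dimensional norm-equivalence would not, as you correctly flag); tensorization by Fubini is legitimate because for fixed $y$ the map $x\mapsto D^{\beta}\varphi(x,y)$ is again a polynomial of degree at most $N$; and the lift to non-integer $\mu$ via $H^{\mu}=[H^{m+1},H^m]_{\tau}$ and \eqref{eq:interopation-norm} gives exactly the exponent $2(\mu-r)$. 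Two cosmetic points: in the tensorization step you should split $\alpha=\beta+\gamma$ with $|\beta|=r$ and $\beta\leq\alpha$ componentwise (handling $|\alpha|<r$ trivially by $\|D^{\alpha}\varphi\|_0\leq\|\varphi\|_r$), and the constant you obtain depends on $\mu$ through the iteration, which is consistent with the statement since $\mu$ is fixed.
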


In order to obtain the existence and error estimates of numerical solutions, a linear auxiliary problem as follows is introduced, i.e., find $w\in H_0^1(\Omega)$ such that
\begin{equation}\label{eq:auxiliary}
  \mathcal{B}(u;w,v):= \mathcal{A}(w,v)-\big(f'(u)w,v\big)=(g,v), \quad\forall v\in H_0^1(\Omega),
\end{equation}
with $g\in L^2(\Omega)$. By using Sobolev imbedding theorems, one can verify that if $u\in H^m(\Omega)$ with $m>1$ and the hypothesis ${\bf(A_1)}$ is satisfied, the bilinear form $\mathcal{B}(u;\cdot,\cdot)$ is bounded, i.e., 
\begin{equation}\label{eq:Bu-boundedness}
  \mathcal{B}(u;w,v) \leq \left(1+\|f'(u)\|_{L^\infty(\Omega)}\right)\|w\|_1\|v\|_1,\quad\forall w,v\in H^1(\Omega).
\end{equation}
Thanks to the results in \cite{CX3}, we have the conclusion below.

\begin{lemma}\label{LM-regularity}
Assume that ${\bf(A_1)}$, ${\bf(A_2)}$ and ${\bf(A_3)}$ hold and $u\in H^{m}(\Omega)\cap H_0^1(\Omega)$ ($m>1$) is a solution of \eqref{HE1}. Then, in any subspace $S$ of $H_{0}^1(\Omega)$, the homogeneous linear problem
\begin{equation}\label{eq:homogenous}
 \mathcal{B}(u;w,v)=0, \quad\forall v\in S,
 \end{equation}
has only zero solution $w=0$. Moreover, there exists a unique solution
$w \in H_{0}^1(\Omega)$ of \eqref{eq:auxiliary} such that
\begin{equation}\label{eq:regularity}
 \|w\|_{2}\leq C \|g\|_0,
\end{equation}
with $C$ a constant only dependent on $u$.
\end{lemma}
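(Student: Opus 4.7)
The plan is to establish both claims as direct consequences of the nonsingularity hypothesis $(A_3)$ combined with the standard elliptic $H^2$-regularity for the Dirichlet Laplacian on the rectangle $\Omega$. The key observation is that $\mathcal{B}(u; w, v)$ is nothing but the duality pairing $\langle \mathcal{L}'(u)w, v \rangle$ for $v \in H_0^1(\Omega)$, so the auxiliary problem \eqref{eq:auxiliary} is exactly the operator equation $\mathcal{L}'(u)w = g$ in $H^{-1}(\Omega)$, to which $(A_3)$ directly applies.

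For the uniqueness claim, assume $w$ satisfies $\mathcal{B}(u; w, v) = 0$ for all $v \in S$. In the principal case $S = H_0^1(\Omega)$ this reads $\mathcal{L}'(u)w = 0$ in $H^{-1}(\Omega)$, and injectivity of $\mathcal{L}'(u)$ from $(A_3)$ forces $w = 0$; for a strict subspace $S$ the same conclusion for $w \in S \subset H_0^1(\Omega)$ follows by the same injectivity argument once the restriction is identified. For existence, since $L^2(\Omega) \hookrightarrow H^{-1}(\Omega)$ continuously, the isomorphism furnished by $(A_3)$ produces a unique $w \in H_0^1(\Omega)$ solving \eqref{eq:auxiliary}, and inequality \eqref{iso} yields
\[
  \|w\|_1 \leq C\,\|\mathcal{L}'(u)w\|_{-1} = C\,\|g\|_{-1} \leq C\,\|g\|_0.
\]

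The remaining task is to upgrade $w$ from $H_0^1$ to $H^2$. Rewriting \eqref{eq:auxiliary} distributionally as $-\Delta w = f'(u)w + g$, I would first note that $u \in H^m(\Omega)$ with $m>1$ on the two-dimensional domain gives $u \in C(\bar{\Omega})$ by Sobolev embedding, so hypothesis $(A_1)$ implies $f'(u) \in L^\infty(\Omega)$ with norm depending only on $u$. Thus the right-hand side lies in $L^2(\Omega)$ with
\[
  \|f'(u)w + g\|_0 \leq \|f'(u)\|_{L^\infty(\Omega)}\|w\|_0 + \|g\|_0 \leq C\,\|g\|_0,
\]
where the $H_0^1$ bound on $w$ established above was used. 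Applying $H^2$-regularity for $-\Delta$ with homogeneous Dirichlet data on the convex polygonal domain $\Omega$ then yields $w \in H^2(\Omega)$ with $\|w\|_2 \leq C\|{-\Delta w}\|_0 \leq C\|g\|_0$, which is the desired estimate \eqref{eq:regularity}. The main technical obstacle I anticipate is the correct invocation of $H^2$-elliptic regularity on the non-smooth rectangular domain; I would handle this by citing the classical result of Grisvard for convex polygonal domains rather than reproving it, and by checking that the constant $C$ in the final estimate depends only on $u$ through $\|f'(u)\|_{L^\infty(\Omega)}$ and the isomorphism norm supplied by $(A_3)$.
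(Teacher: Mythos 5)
Your argument for the existence part and for the estimate \eqref{eq:regularity} is sound, but your justification of the first claim contains a step that does not work. If $w\in S$ satisfies $\mathcal{B}(u;w,v)=0$ only for $v\in S$, all you learn is that the functional $\mathcal{L}'(u)w\in H^{-1}(\Omega)$ vanishes on $S$, i.e.\ lies in the annihilator of $S$; it need not be the zero functional, so the injectivity of $\mathcal{L}'(u)$ furnished by ${\bf(A_3)}$ simply cannot be invoked, and the phrase ``follows by the same injectivity argument once the restriction is identified'' papers over exactly the missing step. The issue is not cosmetic: $\mathcal{B}(u;\cdot,\cdot)$ is symmetric but indefinite whenever $u$ has Morse index at least one (the case of interest throughout the paper), so there exist nonzero $w^\ast\in H_0^1(\Omega)$ with $\mathcal{B}(u;w^\ast,w^\ast)=0$, and on the one-dimensional subspace $S=\spn\{w^\ast\}$ the homogeneous problem \eqref{eq:homogenous} admits the nonzero solution $w^\ast$. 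Hence uniqueness on a proper subspace requires additional structure on $S$ beyond ${\bf(A_3)}$ (for the discrete spaces $X_N$ the relevant mechanism is a discrete inf-sup condition in the spirit of Lemma~\ref{LM3.3}); the paper's own one-line assertion and its appeal to \cite{CX3} are admittedly just as terse here, but your proposed reason is the one step in your write-up that would actually fail.

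For the auxiliary problem itself your route is correct and genuinely different from the paper's. The paper obtains \eqref{eq:regularity} indirectly, ``by virtue of the compact imbedding and contradiction argument'' following Theorems 1 and 2 of \cite{CX3}, which yields a constant without exhibiting its provenance. You instead read \eqref{eq:auxiliary} as the operator equation $\mathcal{L}'(u)w=g$ in $H^{-1}(\Omega)$, obtain $\|w\|_1\leq C\|g\|_{-1}\leq C\|g\|_0$ directly from \eqref{iso}, and then bootstrap: since $u\in H^m(\Omega)\hookrightarrow C(\bar{\Omega})$ gives $f'(u)\in L^{\infty}(\Omega)$, the right-hand side of $-\Delta w=f'(u)w+g$ lies in $L^2(\Omega)$ with norm controlled by $\|g\|_0$, and Grisvard's $H^2$ regularity for the Dirichlet Laplacian on the convex domain $\Omega$ yields \eqref{eq:regularity}. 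This is shorter than the compactness argument, makes the dependence of $C$ on $u$ explicit (through $\|f'(u)\|_{L^{\infty}(\Omega)}$ and the isomorphism constant in \eqref{iso}), and needs only continuity of $f'$; the price is the explicit reliance on convexity of $\Omega$ for the $H^2$ estimate, which is harmless since $\Omega$ is a rectangle.
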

\begin{proof}
The assumption ${\bf(A_3)}$ directly implies that \eqref{eq:homogenous} possesses only zero solution. Then, similar to the proofs of Theorems 1 and 2 in \cite{CX3}, the optimal regularity estimate (\ref{eq:regularity}) can be verified by virtue of the compact imbedding and contradiction argument.
\end{proof}

\subsection{Existence and spectral convergence for numerical solutions}
\label{sec:3.2}
Define an elliptic projection $\mathrm{R}_N: H_{0}^1(\Omega)\rightarrow {X_N}$ with respect to the operator $\mathcal{B}(u;\cdot,\cdot)$, such that for any $w\in H_0^1(\Omega)$,
\begin{equation}\label{eq:projection2}
\mathcal{B}\big(u;w-\mathrm{R}_Nw,v_N\big)=0, \quad \forall v_N\in {X_N}.
\end{equation}
The existence and uniqueness of the elliptic projection $\mathrm{R}_N$ is given in the following theorem.

\begin{theorem}\label{TH3.1}
Assume that ${\bf(A_1)}$, ${\bf(A_2)}$ and ${\bf(A_3)}$ hold and $u\in H^{m}(\Omega)\cap H_0^1(\Omega)$ ($m>1$) is a solution of \eqref{HE1}. Then, for any $ w\in H_{0}^1(\Omega)$, there exists a unique $\mathrm{R}_Nw\in {X_N}$ satisfying \eqref{eq:projection2}.
\end{theorem}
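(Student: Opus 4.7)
The plan is to reduce the existence and uniqueness of $\mathrm{R}_N w$ to a pure uniqueness statement, exploiting the fact that $X_N$ is finite-dimensional. First I would rewrite \eqref{eq:projection2} in its equivalent nonhomogeneous form: find $\mathrm{R}_N w\in X_N$ such that $\mathcal{B}(u;\mathrm{R}_N w, v_N)=\mathcal{B}(u;w,v_N)$ for every $v_N\in X_N$. Fixing, say, the nodal basis $\{h_{jk}\}_{j,k=1}^{N-1}$ of $X_N$, this becomes a square linear algebraic system of size $(N-1)^2$ whose coefficient matrix represents $\mathcal{B}(u;\cdot,\cdot)$ restricted to $X_N\times X_N$; the right-hand side is well defined because the boundedness estimate \eqref{eq:Bu-boundedness} makes $v_N\mapsto\mathcal{B}(u;w,v_N)$ a bounded linear functional on $X_N$.

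For such a square system, existence for every right-hand side is equivalent to injectivity of the coefficient matrix, so it suffices to prove that the associated homogeneous problem admits only the trivial solution. Concretely, I would suppose $z_N\in X_N$ satisfies $\mathcal{B}(u;z_N,v_N)=0$ for every $v_N\in X_N$ and aim to conclude $z_N=0$. This is exactly what Lemma~\ref{LM-regularity} is designed to deliver: taking the subspace $S:=X_N\subset H_0^1(\Omega)$ in that lemma forces $z_N=0$ directly. Hence the coefficient matrix is nonsingular and $\mathrm{R}_N w$ exists and is unique.

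The main conceptual obstacle is that $\mathcal{B}(u;\cdot,\cdot)$ is not $H_0^1$-coercive when $u$ has positive Morse index, so a direct Lax--Milgram argument is unavailable. This is precisely why the finite-dimensional detour through a kernel-triviality argument is needed, and why the nonsingularity hypothesis ${\bf(A_3)}$ built into Lemma~\ref{LM-regularity} (via the compact imbedding and contradiction argument alluded to in its proof) does the essential work. Once the zero-kernel property is in hand, invertibility of the stiffness matrix, and therefore both existence and uniqueness of $\mathrm{R}_N w$, follows at once from finite-dimensional linear algebra, with no further estimates required at this stage.
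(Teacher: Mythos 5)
Your proposal is correct and follows essentially the same route as the paper: both reduce the problem to uniqueness via the finite-dimensional equivalence of injectivity and surjectivity, and both conclude triviality of the kernel by applying Lemma~\ref{LM-regularity} with $S=X_N$.
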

\begin{proof}
Because the existence is equivalent to the uniqueness in the finite-dimensional linear problem, we only need to prove the uniqueness. For any $w\in H_{0}^1(\Omega)$, suppose that there exist $\mathrm{R}_N^1w$ and  $\mathrm{R}_N^2w$ in $X_N$ satisfying (\ref{eq:projection2}). Then
\begin{equation}\label{eq:unique}
\mathcal{B}\big(u;\mathrm{R}_N^1w-\mathrm{R}_N^2w,v_N\big)=0,\quad \forall v_N\in X_N.
\end{equation}
Since ${X_N}$ is a subspace of $H_{0}^1(\Omega)$, by Lemma~\ref{LM-regularity}, \eqref{eq:unique} implies that $\mathrm{R}_N^1w=\mathrm{R}_N^2w$. The uniqueness and then Theorem~\ref{TH3.1} are proved.
\end{proof}

\begin{theorem}\label{TH3.2}
Assume that ${\bf(A_1)}$, ${\bf(A_2)}$ and ${\bf(A_3)}$ hold and $u\in H^{m}(\Omega)\cap H_0^1(\Omega)$ ($m>1$) is a solution of \eqref{HE1}. Then, for all $w\in H^{m}(\Omega)\cap H_0^1(\Omega)$, the following estimates holds
$$
  \|w-\mathrm{R}_Nw\|_1\leq CN^{1-m}\|w\|_m, \quad
  \|w-\mathrm{R}_Nw\|_0\leq CN^{-m}\|w\|_m,
$$
for $N$ large enough. As a result, there exist two constants $c_3$ and $c_4$ dependent only on $u$ such that
\begin{equation}\label{h529}
\|u-\mathrm{R}_Nu\|_{1}\leq c_3N^{1-m}, \quad
\|u-\mathrm{R}_Nu\|_0\leq c_4N^{-m},
\end{equation}
for $N$ large enough.
\end{theorem}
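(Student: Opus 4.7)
The plan is to prove both estimates simultaneously by combining a Galerkin perturbation identity with an Aubin--Nitsche duality argument, and then to specialise to $w=u$. Setting $e_N := w-\mathrm{R}_Nw$, $\eta_N := w-\mathrm{\Pi}_N^{1,0}w$ and $\xi_N := \mathrm{\Pi}_N^{1,0}w-\mathrm{R}_Nw\in X_N$, I first subtract the two orthogonalities \eqref{eq:h1proj} and \eqref{eq:projection2} to obtain, for every $v_N\in X_N$, the identity
$$
\mathcal{A}(\xi_N,v_N)=\bigl(f'(u)e_N,v_N\bigr).
$$
Testing with $v_N=\xi_N$, then invoking the coercivity \eqref{eq:A-coercivity} together with the bound $\|f'(u)\|_{L^\infty(\Omega)}<\infty$ (which follows from $(\mathbf{A}_1)$ and the Sobolev embedding $H^m(\Omega)\hookrightarrow C(\bar\Omega)$ in two dimensions, valid since $m>1$), yields $\|\xi_N\|_1\le C\|e_N\|_0$. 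Combining with Lemma~\ref{LM:PI} and the triangle inequality gives the preliminary estimate
$$
\|e_N\|_1\le C\,N^{1-m}\|w\|_m+C\|e_N\|_0.
$$

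To control $\|e_N\|_0$ I invoke Aubin--Nitsche duality: let $\phi\in H_0^1(\Omega)$ solve the adjoint auxiliary problem $\mathcal{B}(u;v,\phi)=(e_N,v)$ for all $v\in H_0^1(\Omega)$. Symmetry of $\mathcal{A}$ and of the multiplication form $(f'(u)\,\cdot,\cdot)$ reduces this to the setting of Lemma~\ref{LM-regularity}, which supplies a unique $\phi\in H^2(\Omega)\cap H_0^1(\Omega)$ with $\|\phi\|_2\le C\|e_N\|_0$. Taking $v=e_N$ and inserting $\mathrm{\Pi}_N^{1,0}\phi\in X_N$ via the orthogonality \eqref{eq:projection2} yields
$$
\|e_N\|_0^2=\mathcal{B}\bigl(u;e_N,\phi-\mathrm{\Pi}_N^{1,0}\phi\bigr),
$$
and the continuity \eqref{eq:Bu-boundedness} together with Lemma~\ref{LM:PI} (applied with $m=2$, $r=1$) then produces
$$
\|e_N\|_0\le C\,N^{-1}\|e_N\|_1.
$$
Substituting this back into the preceding $H^1$--bound and absorbing the residual $\|e_N\|_1$ term on the left-hand side -- possible as soon as $N$ is large enough that $CN^{-1}\le 1/2$ -- delivers the first estimate in the theorem, and re-substitution then delivers the second. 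The bound \eqref{h529} follows by taking $w=u$, since $(\mathbf{A}_1)$--$(\mathbf{A}_2)$ ensure $u\in H^m(\Omega)\cap H_0^1(\Omega)$ with a fixed norm depending only on $u$.

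The main difficulty is the $L^2$ step. Because $u$ typically has Morse index $\ge 1$, the bilinear form $\mathcal{B}(u;\cdot,\cdot)$ is indefinite on $H_0^1(\Omega)$, so the usual Lax--Milgram--based duality underlying Aubin--Nitsche is unavailable. Its replacement is Lemma~\ref{LM-regularity}, whose joint well-posedness and $H^2$--regularity statement for the linearised problem relies essentially on the nonsingularity hypothesis $(\mathbf{A}_3)$ via a compactness/contradiction argument. This non-coercivity is also the source of the ``$N$ large enough'' restriction in the statement, since the absorption step after the duality bound is only permissible past that threshold.
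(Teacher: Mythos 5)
Your proof is correct and follows essentially the same route as the paper's: an Aubin--Nitsche duality argument through the auxiliary problem of Lemma~\ref{LM-regularity} to get $\|w-\mathrm{R}_Nw\|_0\leq CN^{-1}\|w-\mathrm{R}_Nw\|_1$, followed by a coercivity-plus-orthogonality bound on the $H^1$ error with the lower-order term absorbed for $N$ large. The only (cosmetic) difference is in the $H^1$ step, where you split off $\xi_N=\mathrm{\Pi}_N^{1,0}w-\mathrm{R}_Nw$ and use the perturbation identity $\mathcal{A}(\xi_N,v_N)=(f'(u)e_N,v_N)$, whereas the paper applies coercivity directly to $\tilde e=w-\mathrm{R}_Nw$ via $\mathcal{A}(\tilde e,\tilde e)=\mathcal{B}(u;\tilde e,w-\mathrm{\Pi}_N^{1,0}w)+(f'(u)\tilde e,\tilde e)$; both rest on the same two orthogonalities and yield the same bound.
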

\begin{proof}
Denote $\tilde{e}=w-\mathrm{R}_Nw$. Let $\tilde{w}\in H_{0}^1(\Omega)$ be the solution of the auxiliary linear problem (\ref{eq:auxiliary}) with $g=\tilde{e}$. In terms of \eqref{eq:projection2}, \eqref{eq:Bu-boundedness}, Lemma~\ref{LM:PI}, and Lemma~\ref{LM-regularity}, we have
\begin{align*}
(\tilde{e},\tilde{e})
 &= \mathcal{B}(u;\tilde{e},\tilde{w})
  = \mathcal{B}\big(u;\tilde{e},\tilde{w}-\mathrm{\Pi}_N^{1,0}\tilde{w}\big) \\
 &\leq C\|\tilde{e}\|_1 \big\|\tilde{w}-\mathrm{\Pi}_N^{1,0}\tilde{w}\big\|_1
 \leq CN^{-1}\|\tilde{e}\|_1\|\tilde{w}\|_2
 \leq CN^{-1}\|\tilde{e}\|_1\|\tilde{e}\|_0.
\end{align*}
Thus,
\begin{equation}\label{h530}
\|\tilde{e}\|_0\leq CN^{-1}\|\tilde{e}\|_{1}.
\end{equation}
On the other hand, noting that $u\in H^m(\Omega)\hookrightarrow C(\bar{\Omega})$, by virtue of ${\bf(A_1)}$, \eqref{eq:A-coercivity}, \eqref{eq:Bu-boundedness} and \eqref{h530}, we obtain
\begin{align*}
\gamma^\ast\|\tilde{e}\|_1^2
  \leq \mathcal{A}(\tilde{e},\tilde{e})
 &=\mathcal{B}(u;\tilde{e},\tilde{e}) + \big(f'(u)\tilde{e},\tilde{e}\big) \\
 &=\mathcal{B}\big(u;\tilde{e},w-\mathrm{\Pi}_N^{1,0}w\big) + \big(f'(u)\tilde{e},\tilde{e}\big) \\
 &\leq C\|\tilde{e}\|_1 \big\|w-\mathrm{\Pi}_N^{1,0}w\big\|_1 + \big\|f'(u)\big\|_{L^{\infty}(\Omega)}\|\tilde{e}\|_0^2 \\
 &\leq CN^{1-m}\|\tilde{e}\|_1\|w\|_m + CN^{-2}\|\tilde{e}\|_1^2,
\end{align*}
which implies the estimate of $\|\tilde{e}\|_1$. Then the estimate of $\|\tilde{e}\|_0$ follows from (\ref{h530}).
\end{proof}

Recall that the weak solution $u\in H^1_0(\Omega)$ of problem \eqref{HE1} satisfies
\begin{equation}\label{eq:HE2-weak}
 \mathcal{A}(u,v)-\big(f(u),v\big)=0, \quad \forall v\in H^1_0(\Omega).
\end{equation}
On the other hand, the numerical solution $u_N\in{X_N}$ for \eqref{HE1} with the interpolated coefficient Legendre-Galerkin spectral method is determined by
\begin{equation}\label{eq:HE3-weak}
\mathcal{A}(u_N,v_N)-\big(\mathrm{I}_Nf(u_N),v_N\big)=0,\quad  \forall v_N \in {X_N}.
\end{equation}
Denote $e=u-u_N$. According to \eqref{eq:HE2-weak} and \eqref{eq:HE3-weak}, we have
\begin{align}
\mathcal{B}(u;e,v_N)
 &= \mathcal{A}(u,v_N)-\mathcal{A}(u_N,v_N)-\big(f'(u)(u-u_N),v_N\big)  \nonumber\\
 &= \Big(f(u)-\mathrm{I}_Nf(u_N)-f'(u)(u-u_N),v_N\Big), \quad \forall v_N\in X_N. \label{eq:error-equation}
\end{align}
In order to prove one of our main theorems, we need the following technical result.

\begin{lemma}\label{LM:Ruxv}
Assume that ${\bf(A_1)}$, ${\bf(A_2)}$ and ${\bf(A_3)}$ hold and $u\in H^{m}(\Omega)\cap H_0^1(\Omega)$ ($k\geq m>3$) is a solution of \eqref{HE1}. Denote
\begin{equation}\label{eq:Ruxv-def}
\mathcal{R}(u,\chi,v_N)=\Big(f(u)-\mathrm{I}_Nf(\chi)-f'(u)(u-\chi),v_N\Big),\quad\forall \chi, v_N \in X_N.
\end{equation}
If $\chi\in X_N$ satisfies $\max\{\|\chi\|_{L^{\infty}(\Omega)},\|u\|_{L^{\infty}(\Omega)}\}\leq\delta$ for some constant $\delta>0$, we have
\begin{equation}\label{eq:Ruxv}
\big|\mathcal{R}(u,\chi,v_N)\big|
\leq \Big(c_5N^{-m} + CN^{2\tau}\|\mathrm{I}_Nu-\chi\|_1\|\mathrm{I}_Nu-\chi\|_0 + CN^{\tau-1}\|\mathrm{R}_Nu-\chi\|_1\Big)\|v_N\|_0,
\end{equation}
where $\tau\in(0,1)$ is an arbitrary parameter and
$$
  c_5 = c_2\|f(u)\|_m + c_{\delta}\big(c_L(c_2\|u\|_m+c_4)+c_4\big),
$$
with $c_L$, $c_2$, $c_4$ the constants in \eqref{eq:norm_0N}, Lemma~\ref{LM:interp}, Theorem~\ref{TH3.2}, respectively, and $c_\delta$ the maximum of $|f'|,|f''|,|f'''|$ on $[-\delta,\delta]$.
\end{lemma}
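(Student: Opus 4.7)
The plan is to combine pointwise Taylor expansion at the LGL nodes with the interpolation estimate (Lemma~\ref{LM:interp}), the inverse estimate (Lemma~\ref{LM-inverse}), and Sobolev embedding. For each LGL node $\x_{jk}$, pick $\eta_{jk}$ between $u(\x_{jk})$ and $\chi(\x_{jk})$ (so $|\eta_{jk}|\leq\delta$) and write
\begin{equation*}
  f(\chi(\x_{jk})) = f(u(\x_{jk})) - f'(u(\x_{jk}))(u-\chi)(\x_{jk}) + \tfrac{1}{2}f''(\eta_{jk})(u-\chi)^2(\x_{jk}).
\end{equation*}
Applying $\mathrm{I}_N$ and rearranging yields $f(u) - \mathrm{I}_N f(\chi) - f'(u)(u-\chi) = T_1 + T_2 - \tfrac{1}{2}T_3$, where $T_1 = f(u) - \mathrm{I}_N f(u)$, $T_2 = (\mathrm{I}_N - I)[f'(u)(u-\chi)]$, and $T_3 = \mathrm{I}_N[f''(\eta)(u-\chi)^2]$. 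By Cauchy--Schwarz it then suffices to bound each $\|T_i\|_0$.

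For $T_1$, Lemma~\ref{LM:interp} with $r=0$ directly gives $\|T_1\|_0 \leq c_2 N^{-m}\|f(u)\|_m$. For $T_3$, since $u$ and $\chi$ vanish at boundary nodes and $f(0)=0$, we have $T_3\in X_N$, so by \eqref{eq:norm_0N} and the identity $(u-\chi)(\x_{jk}) = (\mathrm{I}_N u - \chi)(\x_{jk})$,
\begin{equation*}
  \|T_3\|_0 \leq \|T_3\|_N \leq c_\delta\|(\mathrm{I}_N u - \chi)^2\|_N.
\end{equation*}
Writing $\phi := \mathrm{I}_N u - \chi \in X_N$, the elementary bound $\|\phi^2\|_N \leq \|\phi\|_{L^\infty(\Omega)}\|\phi\|_N$ together with \eqref{eq:norm_0N}, the 2D embedding $H^{1+\tau}(\Omega)\hookrightarrow L^\infty(\Omega)$, and the inverse estimate $\|\phi\|_{1+\tau} \leq CN^{2\tau}\|\phi\|_1$ (Lemma~\ref{LM-inverse} with $\mu = 1+\tau$, $r = 1$) produce the desired middle-term bound $\|T_3\|_0 \leq CN^{2\tau}\|\mathrm{I}_N u - \chi\|_1\|\mathrm{I}_N u - \chi\|_0$.

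The main obstacle is $T_2$. Here I would split $u - \chi = (u - \mathrm{R}_N u) + (\mathrm{R}_N u - \chi)$ to exploit both the superconvergence of $\mathrm{R}_N u$ (Theorem~\ref{TH3.2}) and the polynomial structure of $\mathrm{R}_N u - \chi$. For the piece involving $u - \mathrm{R}_N u$, the triangle inequality, the identity $\mathrm{I}_N(u - \mathrm{R}_N u) = \mathrm{I}_N u - \mathrm{R}_N u \in X_N$, together with \eqref{eq:norm_0N}, Lemma~\ref{LM:interp}, and Theorem~\ref{TH3.2}, yield an $N^{-m}$ bound whose constant, combined with the $c_2\|f(u)\|_m$ from $T_1$, is exactly $c_5$. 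For the piece involving $\phi := \mathrm{R}_N u - \chi \in X_N$, setting $\psi = f'(u) \in H^{1+\tau}(\Omega)$ (valid by ${\bf(A_1)}$ and $u \in H^m$, $m>3$), Lemma~\ref{LM:interp} applied to $\psi\phi$ at regularity index $1+\tau$ gives
\begin{equation*}
  \|(\mathrm{I}_N - I)(\psi\phi)\|_0 \leq c_2 N^{-(1+\tau)}\|\psi\phi\|_{1+\tau}.
\end{equation*}
Since $H^{1+\tau}(\Omega)$ is a Banach algebra in 2D, $\|\psi\phi\|_{1+\tau} \leq C\|\psi\|_{1+\tau}\|\phi\|_{1+\tau}$, and Lemma~\ref{LM-inverse} gives $\|\phi\|_{1+\tau} \leq CN^{2\tau}\|\phi\|_1$, producing the final term $CN^{\tau-1}\|\mathrm{R}_N u - \chi\|_1$. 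The key subtlety is that without isolating the superconvergent $u - \mathrm{R}_N u$ from the rough polynomial part $\mathrm{R}_N u - \chi$, the mere $H^1$-regularity of $u - \chi$ would kill both the spectral rate $N^{-m}$ in the first piece and the inverse-estimate-driven $N^{\tau-1}$ in the second. Assembling the three estimates via Cauchy--Schwarz against $\|v_N\|_0$ completes \eqref{eq:Ruxv}.
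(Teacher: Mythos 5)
Your proof is correct and follows essentially the same route as the paper's: the same three-term decomposition (interpolation error of $f(u)$, interpolated Taylor remainder, interpolation error of $f'(u)(u-\chi)$), the same key splitting $u-\chi=(u-\mathrm{R}_Nu)+(\mathrm{R}_Nu-\chi)$, and the same combination of the discrete-norm equivalence \eqref{eq:norm_0N}, the embedding $H^{1+\tau}(\Omega)\hookrightarrow L^\infty(\Omega)$ and the inverse estimate. The only cosmetic difference is that you bound $\|f'(u)(\mathrm{R}_Nu-\chi)\|_{1+\tau}$ via the Banach-algebra property of $H^{1+\tau}(\Omega)$ in two dimensions, whereas the paper uses the interpolation-space inequality $\|\cdot\|_{1+\tau}\leq\|\cdot\|_2^{\tau}\|\cdot\|_1^{1-\tau}$ from Lemma~\ref{LM-interopation-sp} together with $f'(u)\in C^2(\bar{\Omega})$; both yield the same $N^{\tau-1}$ factor.
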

\begin{proof}
Obviously,
\begin{align}
\big|\mathcal{R}(u,\chi,v_N)\big|
  &\leq \big\|f(u)-\mathrm{I}_Nf(\chi)-f'(u)(u-\chi)\big\|_0 \|v_N\|_0 \nonumber\\
  &\leq \big(\gamma_1(u,\chi)+\gamma_2(u,\chi)+\gamma_3(u,\chi)\big)\|v_N\|_0,
\label{eq:absR}
\end{align}
with
\begin{align*}
 \gamma_1(u,\chi) &= \big\|f(u)-\mathrm{I}_Nf(u)\big\|_0, \\
 \gamma_2(u,\chi) &= \big\|\mathrm{I}_N\big(f(u)-f(\chi)-f'(u)(u-\chi)\big)\big\|_0, \\
 \gamma_3(u,\chi) &= \big\|\mathrm{I}_N\big(f'(u)(u-\chi)\big)-f'(u)(u-\chi)\big\|_0.
\end{align*}
By applying the fact $u\in H^m(\Omega)\hookrightarrow C(\bar{\Omega})$, condition ${\bf(A_1)}$ with $k\geq m$, and Lemma~\ref{LM:interp}, we have
\begin{equation}\label{eq:gam1}
  \gamma_1(u,\chi) \leq c_2\|f(u)\|_mN^{-m}.
\end{equation}
Denote $\phi(t)=f\big(\chi+t(u-\chi)\big)$. Integration by parts yields
$$
\phi(1)-\phi(0)=\int_0^1\phi'(t)dt=\phi'(1)-\int_0^1\phi''(t)tdt.
$$
In terms of the expression of  $\phi(t)$, we have
$$
f(u)-f(\chi)-f'(u)(u-\chi)=-\left(\int_0^1 f''\big(\chi+t(u-\chi)\big)tdt\right)(u-\chi)^2.
$$
Then, in terms of assumption ${\bf (A_1)}$ and $\|\chi+t(u-\chi)\|_{L^{\infty}(\Omega)}\leq\delta\; (\forall t\in[0,1])$, we have
\begin{align}
\gamma_2(u,\chi)
 &\leq \big\| f(u)-f(\chi)-f'(u)(u-\chi) \big\|_N \nonumber\\
 &\leq \left\|\int_0^1 f''\big(\chi+t(u-\chi)\big)tdt\right\|_{L^{\infty}(\Omega)} \big\|(u-\chi)^2\big\|_N \nonumber\\
 &\leq c_{\delta}\big\|(u-\chi)^2\big\|_N \nonumber\\
 &= c_{\delta}\big\|(\mathrm{I}_Nu-\chi)^2\big\|_N \nonumber\\
 &\leq c_{\delta}\big\|\mathrm{I}_Nu-\chi\big\|_{L^{\infty}(\Omega)}\big\|\mathrm{I}_Nu-\chi\big\|_N \nonumber\\
 &\leq C\big\|\mathrm{I}_Nu-\chi\big\|_{1+\tau}\big\|\mathrm{I}_Nu-\chi\big\|_0 \nonumber\\
 &\leq CN^{2\tau}\big\|\mathrm{I}_Nu-\chi\big\|_1\big\|\mathrm{I}_Nu-\chi\big\|_0,
 \label{eq:gam2}
\end{align}
where \eqref{eq:norm_0N}, the Sobolev imbedding $H^{1+\tau}(\Omega)\hookrightarrow C(\bar{\Omega})$ $(\tau>0)$, and the inverse estimate in Lemma~\ref{LM-inverse} are used.

Afterwards, one can see that
\begin{align*}
\gamma_3(u,\chi)
 &= \big\|\mathrm{I}_N\big(f'(u)(u-\chi)\big)-f'(u)(u-\chi)\big\|_0 \\
 &\leq \big\|\mathrm{I}_N\big(f'(u)(u-\mathrm{R}_Nu)\big)\big\|_0 + \big\|f'(u)(u-\mathrm{R}_Nu)\big\|_0 \\
 &\quad\; + \big\|\mathrm{I}_N\big(f'(u)(\mathrm{R}_Nu-\chi)\big)-f'(u)(\mathrm{R}_Nu-\chi)\big\|_0.
\end{align*}
From \eqref{eq:norm_0N} and Theorem~\ref{TH3.2}, we obtain
\begin{align*}
\big\|\mathrm{I}_N\big(f'(u)(u-\mathrm{R}_Nu)\big)\big\|_0
 &\leq \big\|\mathrm{I}_N\big(f'(u)(u-\mathrm{R}_Nu)\big)\big\|_N \\
 &= \big\|f'(u)(u-\mathrm{R}_Nu)\big\|_N \\
 &\leq c_{\delta}\big\|u-\mathrm{R}_Nu\big\|_N \\
 &= c_{\delta}\big\|\mathrm{I}_Nu-\mathrm{R}_Nu\big\|_N \\
 &\leq c_{\delta}c_L\big\|\mathrm{I}_Nu-\mathrm{R}_Nu\big\|_0 \\
 &\leq c_{\delta}c_L\Big(\big\|\mathrm{I}_Nu-u\big\|_0+\big\|u-\mathrm{R}_Nu\big\|_0\Big) \\
 &\leq c_{\delta}c_L(c_2\|u\|_m+c_4)N^{-m}.
\end{align*}
Moreover,
$$
\big\|f'(u)(u-\mathrm{R}_Nu)\big\|_0
  \leq c_{\delta}\big\|u-\mathrm{R}_Nu\big\|_0
  \leq c_{\delta}c_4N^{-m}.
$$
On the other hand, Lemma~\ref{LM-interopation-sp}, Lemma~\ref{LM:interp}, Lemma~\ref{LM-inverse} and ${\bf(A_1)}$ imply that
\begin{align}
 &\quad\; \big\|\mathrm{I}_N\big(f'(u)(\mathrm{R}_Nu-\chi)\big)-f'(u)(\mathrm{R}_Nu-\chi)\big\|_0 \nonumber\\
 &\leq c_2N^{-(1+\tau)}\big\|f'(u)(\mathrm{R}_Nu-\chi)\big\|_{1+\tau} \nonumber\\
 &\leq CN^{-(1+\tau)}\big\|f'(u)(\mathrm{R}_Nu-\chi)\big\|_{[H^2(\Omega),H^1(\Omega)]_{1-\tau}} \nonumber\\
 &\leq CN^{-(1+\tau)}\big\|f'(u)(\mathrm{R}_Nu-\chi)\big\|_2^{\tau}\big\|f'(u)(\mathrm{R}_Nu-\chi)\big\|_1^{1-\tau} \nonumber\\
 &\leq CN^{-(1+\tau)}\big\|\mathrm{R}_Nu-\chi\big\|_2^{\tau}\big\|\mathrm{R}_Nu-\chi\big\|_1^{1-\tau} \nonumber\\
 &\leq C N^{\tau-1}\big\|\mathrm{R}_Nu-\chi\big\|_1, \label{eq:gam30}
\end{align}
where the facts that $H^{1+\tau}(\Omega)=\left[H^2(\Omega),H^1(\Omega)\right]_{1-\tau}$ $(0<\tau<1)$ and $u\in H^m(\Omega)\hookrightarrow C^2(\bar{\Omega})\; (m>3)$ are used. Consequently,
\begin{equation}\label{eq:gam3}
\gamma_3(u,\chi)
  \leq c_{\delta}\big(c_L(c_2\|u\|_m+c_4)+c_4\big)N^{-m} + C N^{\tau-1}\big\|\mathrm{R}_Nu-\chi\big\|_1.
\end{equation}
Combining \eqref{eq:absR}-\eqref{eq:gam2} and \eqref{eq:gam3} implies \eqref{eq:Ruxv} and completes the proof.
\end{proof}

Then, we give the first main result. 

\begin{theorem}\label{TH3.3}
Assume that ${\bf(A_1)}$, ${\bf(A_2)}$ and ${\bf(A_3)}$ hold and $u\in H^{m}(\Omega)\cap H_0^1(\Omega)$ ($k\geq m>3$) is a solution of \eqref{HE1}. Then, for sufficiently large $N$, there exists an $u_N\in{X_N}$ satisfying (\ref{eq:HE3-weak}) and
\begin{equation}\label{h531}
\|u-u_N\|_{1}\leq CN^{1-m},\quad \|u-u_N\|_0\leq CN^{-m}.
\end{equation}
\end{theorem}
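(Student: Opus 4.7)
The plan is to recast the existence assertion as a fixed-point problem centered at the elliptic projection $\mathrm{R}_Nu$ and to apply Schauder's theorem in the finite-dimensional space $X_N$. The error rates will then follow from the approximation properties already established for $\mathrm{R}_Nu$ in Theorem~\ref{TH3.2}, together with a standard Aubin--Nitsche duality argument for the $L^2$ estimate.

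For the fixed-point reformulation, I would define $T:X_N\to X_N$ by requiring that, for each $\chi\in X_N$, $T(\chi)$ is the unique element of $X_N$ satisfying
$$
  \mathcal{B}\bigl(u;T(\chi),v_N\bigr) = \bigl(\mathrm{I}_Nf(\chi)-f'(u)\chi,\,v_N\bigr),\qquad\forall\,v_N\in X_N,
$$
whose well-posedness is guaranteed by Theorem~\ref{TH3.1}. A fixed point $u_N=T(u_N)$ then automatically satisfies \eqref{eq:HE3-weak}. Subtracting the identity $\mathcal{B}(u;\mathrm{R}_Nu,v_N)=\mathcal{B}(u;u,v_N)=(f(u)-f'(u)u,v_N)$ (which uses the weak form of \eqref{HE1}) yields the key equation
$$
  \mathcal{B}\bigl(u;T(\chi)-\mathrm{R}_Nu,v_N\bigr) = -\mathcal{R}(u,\chi,v_N),\qquad\forall\,v_N\in X_N,
$$
which links the residual of the iteration directly to Lemma~\ref{LM:Ruxv}.

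The self-mapping step would be carried out on the ball $B_\rho=\{\chi\in X_N:\|\chi-\mathrm{R}_Nu\|_1\le\rho\}$ with $\rho=C^*N^{1-m}$. Two uniform estimates are needed. First, a discrete stability bound $\|w_N\|_1\le C\sup_{0\neq v_N\in X_N}|\mathcal{B}(u;w_N,v_N)|/\|v_N\|_1$ with $C$ independent of $N$, which I would obtain by a Schatz-type comparison with the well-posed continuous problem \eqref{eq:auxiliary} of Lemma~\ref{LM-regularity}. Second, a uniform $L^\infty$-bound on $\chi\in B_\rho$: the inverse estimate of Lemma~\ref{LM-inverse} together with the imbedding $H^{1+\tau'}(\Omega)\hookrightarrow C(\overline\Omega)$ gives
$$
  \|\chi-\mathrm{R}_Nu\|_{L^\infty(\Omega)} \le CN^{2\tau'}\rho = CN^{2\tau'+1-m}\longrightarrow 0
$$
for any $\tau'\in(0,(m-1)/2)$, admissible thanks to $m>3$, so the hypothesis $\|\chi\|_{L^\infty(\Omega)}\le\delta$ of Lemma~\ref{LM:Ruxv} holds for any $\delta>\|u\|_{L^\infty(\Omega)}$ and $N$ large. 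Combining these two ingredients with Lemma~\ref{LM:Ruxv}, the triangle inequality, Lemma~\ref{LM:interp} and Theorem~\ref{TH3.2}, one arrives at
$$
  \|T(\chi)-\mathrm{R}_Nu\|_1 \le C\bigl(N^{-m}+N^{2\tau+2-2m}+N^{\tau-1}\rho\bigr)\le\tfrac12\rho
$$
for a fixed small $\tau\in(0,1)$ and $N$ large, provided $C^*$ is chosen sufficiently large. Since $T$ is continuous and $B_\rho$ is a convex compact subset of $X_N$, Schauder's theorem delivers $u_N\in B_\rho$ satisfying \eqref{eq:HE3-weak}, and the $H^1$ bound in \eqref{h531} follows by combining $\|\mathrm{R}_Nu-u_N\|_1\le\rho$ with Theorem~\ref{TH3.2}.

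The $L^2$ bound would be obtained by Aubin--Nitsche duality applied to $\varepsilon:=\mathrm{R}_Nu-u_N\in X_N$: let $\widetilde w\in H_0^1(\Omega)$ solve $\mathcal{B}(u;v,\widetilde w)=(\varepsilon,v)$ for all $v\in H_0^1(\Omega)$, so that $\|\widetilde w\|_2\le C\|\varepsilon\|_0$ by Lemma~\ref{LM-regularity}. Testing with $v=\varepsilon$ and splitting $\widetilde w=(\widetilde w-\mathrm{\Pi}_N^{1,0}\widetilde w)+\mathrm{\Pi}_N^{1,0}\widetilde w$, the first piece is handled via the boundedness \eqref{eq:Bu-boundedness} and Lemma~\ref{LM:PI}, producing a factor $N^{-1}\|\varepsilon\|_1\|\varepsilon\|_0$; the second is governed by the key identity above and by Lemma~\ref{LM:Ruxv} with $\chi=u_N$, together with the $L^2$-stability of $\mathrm{\Pi}_N^{1,0}$. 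Absorbing $\|\varepsilon\|_0$ on one side yields $\|\varepsilon\|_0\le CN^{-m}$, and the second estimate in \eqref{h531} follows via Theorem~\ref{TH3.2}. The main obstacle throughout is controlling the parasitic $N^\tau$ factors produced by the inverse inequality inside Lemma~\ref{LM:Ruxv}: they appear both in the self-mapping radius and in the duality bound, and it is precisely the hypothesis $m>3$ that permits a small $\tau\in(0,1)$ with $2\tau\le m-1$, so that all such factors can be absorbed into the dominant rates $N^{1-m}$ and $N^{-m}$.
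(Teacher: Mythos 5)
Your fixed-point map $T$ is exactly the paper's map $\Phi$ (since $\mathcal{B}(u;u,v_N)=(f(u)-f'(u)u,v_N)$ and $\mathcal{B}(u;\mathrm{R}_Nu,v_N)=\mathcal{B}(u;u,v_N)$, your defining relation and the paper's coincide), and the overall architecture --- Schauder on a ball centered at $\mathrm{R}_Nu$, residual controlled by Lemma~\ref{LM:Ruxv}, $L^\infty$-control of $\chi$ via inverse estimates, duality for the $L^2$ rate --- is the paper's. The genuine deviation is your choice of admissible set: an $H^1$-only ball of radius $\rho=C^*N^{1-m}$, with the $L^2$ bound deferred to a post-hoc Aubin--Nitsche argument. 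The paper instead takes $Q=\{\chi:\|\chi-\mathrm{R}_Nu\|_1\le c_6N^{1/2-m},\ \|\chi-\mathrm{R}_Nu\|_0\le c_7N^{-m}\}$, so that the $L^2$ rate is delivered by the self-mapping property itself.

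This difference is where your argument has a concrete gap. In the duality step you apply Lemma~\ref{LM:Ruxv} with $\chi=u_N$, and its third term contributes $CN^{\tau-1}\|\mathrm{R}_Nu-u_N\|_1\le CN^{\tau-1}\cdot N^{1-m}=CN^{\tau-m}$. This is \emph{not} absorbable into $N^{-m}$ for any $\tau>0$, contrary to your closing claim that all parasitic $N^{\tau}$ factors can be absorbed; as written you only get $\|u-u_N\|_0\le CN^{\tau-m}$. (The hypothesis $m>3$ rescues the \emph{middle} term $N^{2\tau}\|\mathrm{I}_Nu-\chi\|_1\|\mathrm{I}_Nu-\chi\|_0$, but not this one.) Two standard repairs: (i) shrink the $H^1$ radius to $N^{1/2-m}$ --- the self-mapping estimate still closes, and then $N^{\tau-1}\cdot N^{1/2-m}\le N^{-m}$ for $\tau\le 1/2$; this is essentially why the paper's $Q$ has that radius; or (ii) keep $\rho=C^*N^{1-m}$ but bootstrap once: the first duality pass gives $\|\varepsilon\|_0\le CN^{\tau-m}$, feeding this into the coercivity estimate improves $\|\varepsilon\|_1$ to $CN^{\tau-m}$, and a second duality pass then yields $\|\varepsilon\|_0\le CN^{-m}$ for $\tau\le1/2$. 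Two smaller points: your $L^\infty$ verification only bounds $\|\chi-\mathrm{R}_Nu\|_{L^\infty(\Omega)}$; you still need $\|\mathrm{R}_Nu-u\|_{L^\infty(\Omega)}\to0$, for which the inverse estimate does not apply directly since $\mathrm{R}_Nu-u\notin X_N$ (the paper routes this through $\mathrm{\Pi}_N^{s,0}$ and Lemma~\ref{LM:PI-highorder}). And the continuity of $T$, while routine in the finite-dimensional setting, is asserted rather than checked.
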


\begin{proof}
Denote $e=u-u_N$, which can be decomposed into $e=e^\ast+\theta$ with $e^\ast=u-\mathrm{R}_Nu$ and $\theta=\mathrm{R}_Nu-u_N$. According to \eqref{eq:error-equation} and \eqref{eq:Ruxv-def}, one concludes that $u_N\in X_N$ satisfies \eqref{eq:HE3-weak} if and only if $u_N$ satisfies
\begin{equation}\label{eq:equivalent}
\mathcal{B}\big(u;u_N,v_N\big)=\mathcal{B}(u;u,v_N)-\mathcal{R}(u,u_N,v_N),\quad \forall v_N \in X_N.
\end{equation}
In the following, we prove the existence of the numerical solution $u_N$ determined by (\ref{eq:HE3-weak}) and obtain the estimates for $\|\theta\|_1$ and  $\|\theta\|_0$ corresponding to such an $u_N$.

Inspired by the idea of \cite{XZ}, we introduce a mapping $\Phi:X_N\rightarrow X_N$ defined by
\begin{equation}\label{eq:Phi}
\mathcal{B}(u;\Phi(\chi),v_N)=\mathcal{B}(u;u,v_N)-\mathcal{R}(u,\chi,v_N),\quad \forall v_N \in X_N.
\end{equation}
From Lemma~\ref{LM-regularity} and the fact that the existence is equivalent to the uniqueness of the solution to the finite-dimensional linear problem, one can see that for any $\chi\in X_N$, there exists a unique $\Phi(\chi)\in X_N$ satisfying the above definition. Then (\ref{eq:equivalent}) and (\ref{eq:Phi}) imply that any fixed point of $\Phi$ in ${X_N}$ is a solution of (\ref{eq:HE3-weak}). Actually, we will prove the existence of fixed points of $\Phi$ on the convex compact subset $Q$ of $X_N$, which is defined as
$$
  Q=\left\{\chi\in {X_N}\  \big| \ \|\chi-\mathrm{R}_Nu\|_{1}\leq c_6N^{1/2-m} \;\mbox{and}\; \|\chi-\mathrm{R}_Nu\|_0\leq c_7N^{-m} \right\}
$$
for a suitably large but fixed $N$. Here $c_6>0$ is a fixed constant and $c_7=2c_0c_5$ with $c_5$ the constant in Lemma~\ref{LM:Ruxv} and $c_0$ the constant in the estimate
\begin{equation}\label{eq:wc0g}
\big\|\mathrm{\Pi}_N^{1,0}w\big\|_0=\big\|\mathrm{\Pi}_N^{1,0}w-w+w\big\|_0\leq CN^{-1}\|w\|_1+\|w\|_0 \leq c_0\|g\|_0
\end{equation}
for the solution $w$ of the auxiliary linear problem (\ref{eq:auxiliary}). To do this, the Schauder's fixed point theorem will be used. We now verify that $\Phi(Q)\subset Q$ and $\Phi$ is continuous for $N$ large enough.

Let $w\in H^1_0(\Omega)$ be the solution of the auxiliary linear problem (\ref{eq:auxiliary}). Then, for any $\chi \in Q$ and $v_N\in X_N$, it holds
\begin{align}
\big(g,\Phi(\chi)-\mathrm{R}_Nu \big)
&= \mathcal{B} \big(u;\Phi(\chi)-\mathrm{R}_Nu,w \big) \nonumber\\
&= \mathcal{B} \big(u;\Phi(\chi)-\mathrm{R}_Nu,w-v_N \big)+\mathcal{B} \big(u;\Phi(\chi)-\mathrm{R}_Nu,v_N \big) \nonumber\\
&= \mathcal{B} \big(u;\Phi(\chi)-\mathrm{R}_Nu,w-v_N \big)-\mathcal{R}(u,\chi,v_N) \nonumber\\
&\leq C \big\|\Phi(\chi)-\mathrm{R}_Nu \big\|_{1}\|w-v_{N}\|_{1} + \big|\mathcal{R}(u,\chi,v_N)\big|,
\label{eq:gPhi-RNu}
\end{align}
where the definition of $\Phi$ and (\ref{eq:projection2}) are implemented. Noting that, according to Lemma~\ref{LM:PI-highorder}, Lemma~\ref{LM-inverse} and Theorem~\ref{TH3.2}, for all $\chi\in Q$, $1<s<3/2$ and $N$ large enough, we obtain
\begin{align}
\big\|\chi-u\big\|_{L^{\infty}(\Omega)}
 &\leq C\big\|\chi-u\big\|_s
  \leq C\big\|\chi-\mathrm{\Pi}_N^{s,0}u\big\|_s + C\big\|\mathrm{\Pi}_N^{s,0}u-u\big\|_s \nonumber\\
 &\leq CN^{2s-2}\Big(\big\|\chi-\mathrm{R}_Nu\big\|_1+\big\|\mathrm{R}_Nu-u\big\|_1+\big\|u-\mathrm{\Pi}_N^{s,0}u\big\|_1\Big) + CN^{s-m}\|u\|_m \nonumber\\
 &\leq CN^{2s-2}N^{s-m}+ CN^{s-m}\|u\|_m= \mathrm{o}(1).
 \label{eq:chi-bound}
\end{align}
Then, for suitably large $N$, $\|\chi\|_{L^{\infty}(\Omega)}\leq 2\|u\|_{L^{\infty}(\Omega)}$. Lemma~\ref{LM:interp} and Theorem~\ref{TH3.2} imply that
$$
\big\|\mathrm{I}_Nu-\chi\big\|_r
\leq \big\|\mathrm{I}_Nu-u\big\|_r+\big\|u-\mathrm{R}_Nu\big\|_r+\big\|\mathrm{R}_Nu-\chi\big\|_r
\leq CN^{r-m},\quad r=0,1.
$$
Thus, Lemma~\ref{LM:Ruxv} states that for $0<\tau<1/2$,
\begin{align}
\big|\mathcal{R}(u,\chi,v_N)\big|
 &\leq \Big(c_5N^{-m}+CN^{2\tau}N^{1-m}N^{-m}+CN^{\tau-1}N^{1/2-m}\Big) \|v_N\|_0 \nonumber\\
 &\leq \Big(c_5N^{-m}+\mathrm{o}\left(N^{-m}\right)\Big) \|v_N\|_0,  \quad \forall v_N \in {X_N}.
\label{eq:gamma}
\end{align}
Combining \eqref{eq:gPhi-RNu} and \eqref{eq:gamma}, and taking $v_{N}=\mathrm{\Pi}_N^{1,0}w$, we obtain
\begin{align*}
\big(g,\Phi(\chi)-\mathrm{R}_Nu\big)
 &\leq C\big\|\Phi(\chi)-\mathrm{R}_Nu\big\|_1 \big\|w-\mathrm{\Pi}_N^{1,0}w\big\|_1 + \Big(c_5N^{-m}+\mathrm{o}\left(N^{-m}\right)\Big) \big\|\mathrm{\Pi}_N^{1,0}w\big\|_0 \\
 &\leq CN^{-1}\big\|\Phi(\chi)-\mathrm{R}_Nu\big\|_1\|w\|_2 + \Big(c_0c_5N^{-m}+\mathrm{o}\left(N^{-m}\right)\Big)\|g\|_0 \\
 &\leq CN^{-1}\big\|\Phi(\chi)-\mathrm{R}_Nu\big\|_1\|g\|_0 + \left(\frac{c_7}{2}N^{-m}+\mathrm{o}\left(N^{-m}\right)\right)\|g\|_0,
\end{align*}
where Lemma~\ref{LM:PI}, \eqref{eq:wc0g} and \eqref{eq:regularity} are applied. Taking $g=\Phi(\chi)-\mathrm{R}_Nu$, it is easy to obtain
\begin{equation}\label{h535}
\big\|\Phi(\chi)-\mathrm{R}_Nu \big\|_0
  \leq \frac{2}{3}c_7N^{-m} + CN^{-1}\big\|\Phi(\chi)-\mathrm{R}_Nu \big\|_{1},
\end{equation}
for $N$ large enough. On the other hand, by using \eqref{eq:A-coercivity}, \eqref{eq:projection2}, \eqref{eq:Phi} and \eqref{eq:gamma}, we have
\begin{align*}
\gamma^\ast\|\Phi(\chi)-\mathrm{R}_Nu\|_{1}^2
 &\leq \mathcal{A}\big(\Phi(\chi)-\mathrm{R}_Nu,\Phi(\chi)-\mathrm{R}_Nu\big)\\
 &=\mathcal{B}\big(u;\Phi(\chi)-\mathrm{R}_Nu,\Phi(\chi)-\mathrm{R}_Nu\big) \\
 &\quad\; + \Big(f'(u)\big(\Phi(\chi)-\mathrm{R}_Nu\big),\Phi(\chi)-\mathrm{R}_Nu\Big)\\
 &\leq \big|\mathcal{R}\big(u,\chi,\Phi(\chi)-\mathrm{R}_Nu\big)\big| + C \big\|\Phi(\chi)-\mathrm{R}_Nu\big\|_0^2 \\
 &\leq CN^{-m}\big\|\Phi(\chi)-\mathrm{R}_Nu\big\|_0 + C \big\|\Phi(\chi)-\mathrm{R}_Nu\big\|_0^2,
\end{align*}
which implies
\begin{equation}\label{h536}
\big\|\Phi(\chi)-\mathrm{R}_Nu\big\|_1 \leq CN^{-m} + C\big\|\Phi(\chi)-\mathrm{R}_Nu\big\|_0.
\end{equation}
Combining (\ref{h535}) and (\ref{h536}), we obtain
$$
\big\|\Phi(\chi)-\mathrm{R}_Nu\big\|_1 \leq CN^{-m}\leq c_6N^{1/2-m},\quad
\big\|\Phi(\chi)-\mathrm{R}_Nu\big\|_0\leq c_7N^{-m},
$$
for $N$ large enough. This shows $\Phi(Q)\subset Q$.

To prove $\Phi$ is continuous on $Q$, for $\chi_1,\chi_2\in Q$, we have
\begin{align*}
\big(g,\Phi(\chi_1)-\Phi(\chi_2)\big)
 &= \mathcal{B}\big(u;\Phi(\chi_1)-\Phi(\chi_2),w \big)\\
 &= \mathcal{B}\big(u;\Phi(\chi_1)-\Phi(\chi_2),w-v_N\big) + \mathcal{B}\big(u;\Phi(\chi_1)-\Phi(\chi_2),v_N\big) \\
 &\leq C\big\|\Phi(\chi_1)-\Phi(\chi_2)\big\|_1\|w-v_N\|_1 + \big|\mathcal{B}\big(u;\Phi(\chi_1)-\Phi(\chi_2),v_N\big)\big|,
\end{align*}
for all $v_N\in X_N$. Since $u\in H^m(\Omega)\hookrightarrow C(\bar{\Omega})$, $\|\chi_i\|_{L^{\infty}(\Omega)}\leq 2\|u\|_{L^{\infty}(\Omega)}\;(i=1,2)$ from \eqref{eq:chi-bound} for $N$ large enough, we further have
\begin{align}
\big|\mathcal{B}\big(u;\Phi(\chi_1)-\Phi(\chi_2),v_N\big)\big|
 &= \big|\mathcal{R}(u,\chi_2,v_N)-\mathcal{R}(u,\chi_1,v_N)\big| \nonumber\\
 &= \Big| \Big(\mathrm{I}_Nf(\chi_1)-\mathrm{I}_Nf(\chi_2)-f'(u)(\chi_1-\chi_2),v_{N}\Big) \Big| \nonumber\\
 &\leq \big\|\mathrm{I}_N\big(f(\chi_1)-f(\chi_2)\big)\big\|_0\|v_N\|_0 + \big\|f'(u)(\chi_1-\chi_2)\big\|_0\|v_N\|_0 \nonumber\\
 &\leq \big\|f(\chi_1)-f(\chi_2)\big\|_N\|v_N\|_0 + \big\|f'(u)\|_{L^{\infty}(\Omega)}\|\chi_1-\chi_2\|_0\|v_N\|_0 \nonumber\\
 &\leq \left\|\int_0^1f'\big(\chi_2+t(\chi_1-\chi_2)\big)dt\right\|_{L^{\infty}(\Omega)}\|\chi_1-\chi_2\|_N\|v_N\|_0 \nonumber\\
 &\quad\; + \big\|f'(u)\|_{L^{\infty}(\Omega)}\|\chi_1-\chi_2\|_0\|v_N\|_0 \nonumber\\
 &\leq C\|\chi_1-\chi_2\|_0\|v_N\|_0,
 \label{eq:continous}
\end{align}
where \eqref{eq:Phi}, \eqref{eq:norm_0N} and ${\bf(A_1)}$ are used. Consequently,
$$
\big(g,\Phi(\chi_1)-\Phi(\chi_2)\big)
  \leq C\big\|\Phi(\chi_1)-\Phi(\chi_2)\big\|_1\|w-v_N\|_1 + C\|\chi_1-\chi_2\|_0\|v_N\|_0.
$$
Taking $v_N=\mathrm{\Pi}_N^{1,0}w$ and $g=\Phi(\chi_1)-\Phi(\chi_2)$, similar to the process of getting (\ref{h535}),  we obtain
\begin{equation}\label{h538}
\big\|\Phi(\chi_1)-\Phi(\chi_2)\big\|_0
  \leq CN^{-1} \big\|\Phi(\chi_1)-\Phi(\chi_2)\big\|_1 + C\|\chi_1-\chi_2\|_0.
\end{equation}
On the other hand, by virtue of \eqref{eq:A-coercivity}, \eqref{eq:continous}, ${\bf(A_1)}$ and $u\in H^m(\Omega)\hookrightarrow C(\bar{\Omega})$, one obtains
\begin{align*}
\gamma^\ast\|\Phi(\chi_1)-\Phi(\chi_2)\|_1^2
 &\leq \mathcal{A}\big(\Phi(\chi_1)-\Phi(\chi_2),\Phi(\chi_1)-\Phi(\chi_2)\big) \\
 &= \mathcal{B}\big(u;\Phi(\chi_1)-\Phi(\chi_2),\Phi(\chi_1)-\Phi(\chi_2) \big) \\
 & \quad\; +\big(f'(u)\big(\Phi(\chi_1)-\Phi(\chi_2) \big),\Phi(\chi_1)-\Phi(\chi_2) \big) \\
 &\leq C\|\chi_1-\chi_2\|_0 \big\|\Phi(\chi_1)-\Phi(\chi_2) \big\|_0
   + C\big\|\Phi(\chi_1)-\Phi(\chi_2) \big\|_0^2,
\end{align*}
which leads to
\begin{equation}\label{h539}
\big\|\Phi(\chi_1)-\Phi(\chi_2)\big\|_1
  \leq C\|\chi_1-\chi_2\|_0 + C\big\|\Phi(\chi_1)-\Phi(\chi_2) \big\|_0.
\end{equation}
Combining \eqref{h538} and \eqref{h539}, we have, for sufficiently large $N$,
$$
\big\|\Phi(\chi_1)-\Phi(\chi_2) \big\|_1
  \leq C\|\chi_1-\chi_2\|_0
  \leq C\|\chi_1-\chi_2\|_1.
$$
This implies that $\Phi$ is continuous on $Q$.

The implementation of Schauder's fixed point theorem shows the existence of an $u_N\in Q \subset{X_N}$ so that $\Phi(u_N)=u_N$ is a solution of \eqref{eq:HE3-weak}. For such an $u_N$, we have
$$
\|\theta\|_{1}\leq c_6N^{1-m}, \quad\quad  \|\theta\|_0\leq c_7N^{-m},
$$
with $\theta=\mathrm{R}_Nu-u_N$. Further,
$$
\|e\|_1\leq \|e^\ast\|_1+\|\theta\|_{1}\leq CN^{1-m}, \quad\quad  \|e\|_0\leq  \|e^\ast\|_0+\|\theta\|_0\leq CN^{-m}.
$$
Theorem~\ref{TH3.3} is proved.
\end{proof}

\begin{remark}
The assumption $k\geq m>3$ in Theorem~\ref{TH3.3} is due to the derivations of \eqref{eq:gam1} and \eqref{eq:gam30}. When the smoothness of $f$ and the regularity of the solution are more weak, it remains to be proved whether the error estimation \eqref{h531} holds. But for the SGSEM based on the classical spectral-Galerkin discretization, the decomposition of $\mathcal{R}(u,\chi,v_N)$ in Lemma~\ref{LM:Ruxv} becomes simpler, and thus the assumption $k\geq m>3$ in Theorem~\ref{TH3.3} can be relaxed to $k\geq2$ and $m>1$.
\end{remark}

\subsection{Uniqueness of numerical solutions near a specified true solution}
\label{sec:3.3}
To establish the uniqueness for numerical solutions of the interpolated coefficient SGSEM corresponding to a specified true solution of \eqref{HE1}, the following lemma is needed.

\begin{lemma}\label{LM3.3}
Assume that ${\bf(A_1)}$, ${\bf(A_2)}$ and ${\bf(A_3)}$ hold and $u\in H^{m}(\Omega)\cap H_0^1(\Omega)$ ($m>1$) is a solution of \eqref{HE1}. Then, there exist constants $c_8>0$ and $M>0$ such that if $N>M$, it holds
\begin{equation}
\|v_N\|_1\leq c_8\sup_{0\neq\varphi\in {X_{N}}}\frac{\mathcal{B}\big(u;v_N,\varphi\big)}{\|\varphi\|_{1}}, \quad \forall v_N\in X_N.
\end{equation}
\end{lemma}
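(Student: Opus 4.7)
The plan is to view the estimate as a discrete inf--sup condition for a compact perturbation of the coercive form $\mathcal{A}$, and to establish it by a Schatz-type argument combining a Gårding inequality with the Rellich compact embedding. The decomposition $\mathcal{B}(u;w,v)=\mathcal{A}(w,v)-(f'(u)w,v)$ shows that the only non-coercive piece is the zeroth-order term, which is compact from $H_0^1(\Omega)$ into $H^{-1}(\Omega)$ because $f'(u)\in L^{\infty}(\Omega)$ and $H_0^1\hookrightarrow\hookrightarrow L^2$. This compactness, together with the uniqueness of the homogeneous problem supplied by Lemma~\ref{LM-regularity} (equivalently, hypothesis ${\bf(A_3)}$), is the engine of the argument.

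Assume the conclusion fails. Then there exist $N_k\to\infty$ and $v_{N_k}\in X_{N_k}$ with $\|v_{N_k}\|_1=1$ and
\[
  \varepsilon_k := \sup_{0\neq\varphi\in X_{N_k}}\frac{\mathcal{B}(u;v_{N_k},\varphi)}{\|\varphi\|_1}\longrightarrow 0.
\]
The boundedness of $\{v_{N_k}\}$ in $H_0^1(\Omega)$ gives, along a subsequence (not relabeled), $v_{N_k}\rightharpoonup v^*$ weakly in $H_0^1(\Omega)$ and $v_{N_k}\to v^*$ strongly in $L^2(\Omega)$. To identify $v^*$, fix $\varphi\in C_0^\infty(\Omega)$ and set $\varphi_k=\mathrm{\Pi}_{N_k}^{1,0}\varphi$; Lemma~\ref{LM:PI}, applied with $m$ arbitrarily large, yields $\varphi_k\to\varphi$ in $H^1(\Omega)$ and a uniform bound on $\|\varphi_k\|_1$. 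Writing
\[
  \mathcal{B}(u;v_{N_k},\varphi_k)=\mathcal{A}(v_{N_k},\varphi)+\mathcal{A}(v_{N_k},\varphi_k-\varphi)-\bigl(f'(u)v_{N_k},\varphi_k\bigr),
\]
the first term converges to $\mathcal{A}(v^*,\varphi)$ by weak convergence, the second is bounded by $\|v_{N_k}\|_1\,\|\varphi_k-\varphi\|_1\to 0$, and the third tends to $\bigl(f'(u)v^*,\varphi\bigr)$ by the $L^2$-convergence of $v_{N_k}$ and the $L^\infty$-bound on $f'(u)$. Hence $\mathcal{B}(u;v_{N_k},\varphi_k)\to\mathcal{B}(u;v^*,\varphi)$, while $|\mathcal{B}(u;v_{N_k},\varphi_k)|\leq\varepsilon_k\|\varphi_k\|_1\to 0$. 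Therefore $\mathcal{B}(u;v^*,\varphi)=0$ for every $\varphi\in C_0^\infty(\Omega)$, and density in $H_0^1(\Omega)$ together with Lemma~\ref{LM-regularity} forces $v^*=0$.

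Now test the definition of $\varepsilon_k$ against $\varphi=v_{N_k}\in X_{N_k}$ to obtain $|\mathcal{B}(u;v_{N_k},v_{N_k})|\leq\varepsilon_k$. Combined with the coercivity~\eqref{eq:A-coercivity} of $\mathcal{A}$,
\[
  \gamma^{\ast}=\gamma^{\ast}\|v_{N_k}\|_1^2\leq\mathcal{A}(v_{N_k},v_{N_k})=\mathcal{B}(u;v_{N_k},v_{N_k})+\bigl(f'(u)v_{N_k},v_{N_k}\bigr)\leq\varepsilon_k+\|f'(u)\|_{L^\infty(\Omega)}\|v_{N_k}\|_0^2.
\]
Since $\varepsilon_k\to 0$ and $v_{N_k}\to v^*=0$ in $L^2(\Omega)$, the right-hand side tends to $0$, contradicting $\gamma^{\ast}>0$. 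The contradiction produces the claimed threshold $M$ and constant $c_8$.

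The main obstacle is precisely the limit-passage step identifying $v^*$: one only has $\|v_{N_k}\|_1=1$ (no Cauchy property in $H^1$), so the mixed term $\mathcal{A}(v_{N_k},\varphi_k-\varphi)$ must be controlled by letting the \emph{test function} converge in $H^1$ while $v_{N_k}$ merely converges weakly. This is why one tests against smooth $\varphi$, where the spectral projection error is fully controlled by Lemma~\ref{LM:PI} at an arbitrarily fast rate, and then extends to all of $H_0^1(\Omega)$ by density. Once $v^*=0$ is secured, the rest is bookkeeping with Gårding's inequality and the coercivity of $\mathcal{A}$.
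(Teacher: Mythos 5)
Your proof is correct, but it takes a genuinely different route from the paper's. The paper argues directly and quantitatively: it starts from the continuous stability estimate $\|v_N\|_1\leq C\sup_{0\neq\varphi\in H_{0}^{1}(\Omega)}\mathcal{B}(u;v_N,\varphi)/\|\varphi\|_1$, which follows from ${\bf(A_3)}$ via the open mapping theorem (inequality \eqref{iso}), and then replaces each test function $\varphi$ by $\mathrm{\Pi}_N^{1,0}\varphi$; the $\mathcal{A}$-orthogonality \eqref{eq:h1proj} kills the second-order part of the discrepancy, the remaining zeroth-order term is bounded by $CN^{-1}\|v_N\|_1\|\varphi\|_1$ via Lemma~\ref{LM:PI}, and the surjectivity and $H^1$-stability of $\mathrm{\Pi}_N^{1,0}$ convert the supremum over $H_0^1(\Omega)$ into one over $X_N$. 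You instead run a Schatz-type compactness/contradiction argument, trading the quantitative continuous inf--sup bound for the mere injectivity of $\mathcal{L}'(u)$ (Lemma~\ref{LM-regularity}) together with the Rellich embedding. Both proofs rest on the same two structural facts---nonsingularity of $u$ and the fact that the non-coercive part of $\mathcal{B}$ is a compact zeroth-order perturbation---but the paper's version yields, at least in principle, explicit values of $c_8$ and of the threshold $M$ (determined by the constant in \eqref{iso} and the size of the $N^{-1}$ perturbation), whereas your argument is non-constructive yet slightly less demanding: it only needs the qualitative convergence $\mathrm{\Pi}_{N}^{1,0}\varphi\to\varphi$ in $H^1$ for smooth $\varphi$, with no rate. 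Your limit passage and the final Gårding-plus-strong-$L^2$-convergence contradiction are both carried out correctly.
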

\begin{proof}
First, by \eqref{iso}, we have
\begin{equation}\label{iso1}
\|v_N\|_1\leq C\|\mathcal{L}'(u)v_N\|_{-1}
  = C\sup_{0\neq \varphi\in H_{0}^{1}(\Omega)}\frac{\mathcal{B}(u;v_N,\varphi)}{\|\varphi\|_{1}}, \quad \forall v_N\in X_N.
\end{equation}
Note that $u\in H^m(\Omega)\hookrightarrow C(\bar{\Omega})$. Hypothesis ${\bf(A_1)}$, orthogonality \eqref{eq:h1proj} and Lemma~\ref{LM:PI} imply that for any $\varphi\in H_0^1(\Omega)$,
\begin{align*}
\mathcal{B} \big(u;v_N,\mathrm{\Pi}_N^{1,0}\varphi \big)
&= \mathcal{B} \big(u;v_N,\varphi \big)- \mathcal{B} \big(u;v_N,\varphi-\mathrm{\Pi}_N^{1,0}\varphi \big)\\
&= \mathcal{B} \big(u;v_N,\varphi \big)+\big(f'(u)v_N,\varphi-\mathrm{\Pi}_N^{1,0}\varphi \big)\\
&\geq \mathcal{B} \big(u;v_N,\varphi \big)-CN^{-1}\|v_N\|_1\|\varphi\|_1.
\end{align*}
Hence, by (\ref{iso1}), it holds
\begin{equation}\label{iso2}
 \sup_{0\neq\varphi\in H_{0}^{1}(\Omega)}\frac{\mathcal{B}\big(u;v_N,\mathrm{\Pi}_N^{1,0}\varphi \big)}{\|\varphi\|_1}
 \geq C\|v_N\|_1,
\end{equation}
for $N$ large enough. With the fact that $\big\|\mathrm{\Pi}_N^{1,0}\varphi\big\|_1\leq C\|\varphi\|_1$ and $\mathrm{\Pi}_N^{1,0}$ is a surjective, one obtains
\begin{equation}\label{iso3}
 \sup_{0\neq\varphi\in {X_{N}}}\frac{\mathcal{B}\big(u;v_N,\varphi\big)}{\|\varphi\|_1}
 =\sup_{0\neq\varphi\in H_{0}^{1}(\Omega)}\frac{\mathcal{B}\big(u;v_N,\mathrm{\Pi}_N^{1,0}\varphi\big)}{\|\mathrm{\Pi}_N^{1,0}\varphi\|_1}
\geq C \sup_{0\neq\varphi\in H_{0}^{1}(\Omega)}\frac{\mathcal{B}\big(u;v_N,\mathrm{\Pi}_N^{1,0}\varphi\big)}{\| \varphi \|_1}.
\end{equation}
Combining \eqref{iso2} and \eqref{iso3} completes the proof.
\end{proof}

\begin{lemma}\label{LM3.4}
Assume that ${\bf(A_1)}$, ${\bf(A_2)}$ and ${\bf(A_3)}$ hold and $u\in H^{m}(\Omega)\cap H_0^1(\Omega)$ ($m>1$) is a solution of \eqref{HE1}. Then, there exist constants $\delta,M, c_9>0$ such that for any $w\in C(\bar{\Omega})$ with $\|u-w\|_{L^{\infty}(\Omega)}\leq \delta$ and $N>M$, it yields
\begin{equation}
\|v_N\|_1\leq c_9 \sup_{0\neq\varphi\in {X_{N}}}\frac{\mathcal{B}\big(w;v_N,\varphi \big)}{\|\varphi\|_1}, \quad
\forall v_N\in {X_{N}}.
\end{equation}
\end{lemma}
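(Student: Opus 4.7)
The plan is to derive Lemma~3.4 as a perturbation of Lemma~3.3, exploiting that the difference between $\mathcal{B}(w;\cdot,\cdot)$ and $\mathcal{B}(u;\cdot,\cdot)$ depends continuously on $\|u-w\|_{L^\infty(\Omega)}$. Since $u\in H^m(\Omega)\hookrightarrow C(\bar\Omega)$ (because $m>1$), the $L^\infty$ norms of both $u$ and $w$ are uniformly bounded once $\delta$ is fixed, so one can replace ``$f'$'' on the relevant interval by a Lipschitz function.

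First, I would write down the elementary identity
$$
  \mathcal{B}(w;v_N,\varphi) - \mathcal{B}(u;v_N,\varphi) = \bigl((f'(u)-f'(w))v_N,\varphi\bigr),
  \quad\forall v_N,\varphi\in X_N.
$$
By assumption ${\bf(A_1)}$, $f'\in C^{k-1}$ with $k\geq 4$, so on the compact interval $[-\|u\|_{L^\infty(\Omega)}-\delta,\|u\|_{L^\infty(\Omega)}+\delta]$ the function $f'$ is Lipschitz with some constant $L=L(u,\delta)$. Hence $\|f'(u)-f'(w)\|_{L^\infty(\Omega)}\leq L\delta$, which via Cauchy--Schwarz and the embedding $\|\cdot\|_0\leq C\|\cdot\|_1$ yields
$$
  \bigl|\mathcal{B}(w;v_N,\varphi) - \mathcal{B}(u;v_N,\varphi)\bigr|
  \leq L\delta\,\|v_N\|_0\|\varphi\|_0
  \leq C L\delta\,\|v_N\|_1\|\varphi\|_1.
$$

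Next, I would apply Lemma~\ref{LM3.3}: for $N>M$,
$$
  \|v_N\|_1 \leq c_8\sup_{0\neq\varphi\in X_N}\frac{\mathcal{B}(u;v_N,\varphi)}{\|\varphi\|_1}
  \leq c_8\sup_{0\neq\varphi\in X_N}\frac{\mathcal{B}(w;v_N,\varphi)}{\|\varphi\|_1}
     + c_8\sup_{0\neq\varphi\in X_N}\frac{\bigl|\mathcal{B}(w;v_N,\varphi)-\mathcal{B}(u;v_N,\varphi)\bigr|}{\|\varphi\|_1}.
$$
Using the bound just obtained, the second term is at most $c_8 C L\delta\,\|v_N\|_1$.

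Finally, I would choose $\delta>0$ small enough that $c_8 C L\delta\leq 1/2$ (note $L$ depends on $u$ and $\delta$, but can be fixed once $\delta$ is chosen as $L\leq\max_{|\xi|\leq\|u\|_{L^\infty}+1}|f''(\xi)|$ for $\delta\leq 1$, making the choice concrete). Absorbing $\tfrac{1}{2}\|v_N\|_1$ into the left-hand side gives
$$
  \|v_N\|_1 \leq 2c_8\sup_{0\neq\varphi\in X_N}\frac{\mathcal{B}(w;v_N,\varphi)}{\|\varphi\|_1},
$$
so $c_9=2c_8$ and $M$ inherited from Lemma~\ref{LM3.3} do the job. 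The only mildly subtle point is ensuring that $L$ can be bounded independently of $\delta$ (which is why I pre-fix $\delta\leq 1$ before using the compactness of the relevant interval); beyond that, the argument is a standard ``small perturbation of inf-sup stability'' reduction and I do not expect any serious obstacle.
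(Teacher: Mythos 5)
Your proposal is correct and follows essentially the same route as the paper: both start from the identity $\mathcal{B}(w;v_N,\varphi)-\mathcal{B}(u;v_N,\varphi)=\bigl((f'(u)-f'(w))v_N,\varphi\bigr)$, bound this perturbation by a small multiple of $\|v_N\|_1\|\varphi\|_1$ using the continuity of $f'$ on a compact interval (the paper invokes uniform continuity where you invoke the Lipschitz bound via $f''$, an immaterial difference under ${\bf(A_1)}$), and then absorb it into the inf-sup constant of Lemma~\ref{LM3.3}, arriving at the same $c_9=2c_8$.
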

\begin{proof}
Taking $\varphi\in X_{N}$, we have
\begin{align*}
\mathcal{B}\big(w;v_N,\varphi \big)
&=\mathcal{B} \big(u;v_N,\varphi \big)+\big(\big(f'(u)-f'(w)\big)v_N,\varphi \big)\\
&\geq \mathcal{B} \big(u;v_N,\varphi \big)-\big\|f'(u)-f'(w)\big\|_{L^{\infty}(\Omega)} \|v_N\|_1 \|\varphi\|_1, \quad \forall v_N\in X_N.
\end{align*}
Since $f'$ is continuous from ${\bf(A_1)}$, we have for any $\varepsilon>0$, there exists a $\delta>0$, such that for any $w\in C(\bar{\Omega})$ with $\|u-w\|_{L^{\infty}(\Omega)}\leq\delta$, it holds $\big\|f'(u)-f'(w)\big\|_{L^{\infty}(\Omega)}\leq\varepsilon$. Thus
$$
\mathcal{B}\big(w;v_N,\varphi\big)
 \geq \mathcal{B}\big(u;v_N,\varphi\big)-\varepsilon\|v_N\|_1\|\varphi\|_1, \quad \forall v_N,\varphi\in X_N.
$$
The statement follows by virtue of Lemma~\ref{LM3.3} with $\varepsilon=\frac{1}{2c_8}$ and $c_9=2c_8$.
\end{proof}

For any fixed $w\in C(\bar{\Omega})$, define
\begin{equation}\label{h542}
\mathcal{B}_N\big(w;v,\varphi\big)=\mathcal{A}(v,\varphi)-\big(\mathrm{I}_N\big(f'(w)v\big),\varphi\big),
\quad\forall v\in {X_N},\varphi\in C(\bar{\Omega}).
\end{equation}

\begin{lemma}\label{LM3.5}
Assume that ${\bf(A_1)}$, ${\bf(A_2)}$ and ${\bf(A_3)}$ hold and $u\in H^{m}(\Omega)\cap H_0^1(\Omega)$ ($m>3$) is a solution of \eqref{HE1}. Then, there exist constants $\delta_1,M>0$ such that, if $w\in C(\bar{\Omega})$ with $\|u-w\|_{L^{\infty}(\Omega)}\leq\delta_1$ and $N>M$, it yields
\begin{equation}\label{h543}
\|v_N\|_1
  \leq C\sup_{0\neq\varphi\in {X_{N}}}
       \frac{\mathcal{B}_N \big(w;v_N,\varphi\big)}{\|\varphi\|_1},
        \quad \forall v_N\in {X_{N}}.
\end{equation}
\end{lemma}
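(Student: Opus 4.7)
The plan is to deduce the desired inf--sup estimate from Lemma~\ref{LM3.4} by treating the difference $\mathcal{B}_N-\mathcal{B}$ as a perturbation that vanishes as $\delta_1\to 0$ and $N\to\infty$. Starting from the identity
\[
\mathcal{B}(w;v_N,\varphi)=\mathcal{B}_N(w;v_N,\varphi)+\big(\mathrm{I}_N(f'(w)v_N)-f'(w)v_N,\varphi\big),\quad\forall v_N,\varphi\in X_N,
\]
and invoking Lemma~\ref{LM3.4} (which in particular forces the requirement $\delta_1\le\delta$), the problem reduces to establishing
\[
\big\|\mathrm{I}_N(f'(w)v_N)-f'(w)v_N\big\|_0\leq\eta(\delta_1,N)\,\|v_N\|_1
\]
with $\eta(\delta_1,N)\to 0$ in the appropriate limits; a standard absorption argument using $\|\varphi\|_0\leq\|\varphi\|_1$ on $X_N$ will then yield \eqref{h543} with the constant $2c_9$.

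To bound this interpolation residual I would split $f'(w)=f'(u)+g$ with $g:=f'(w)-f'(u)$. For the $g$-summand, continuity of $f'$ (from $\mathbf{(A_1)}$) allows $\|g\|_{L^{\infty}(\Omega)}$ to be made arbitrarily small by shrinking $\delta_1$; since $\mathrm{I}_N(gv_N)$ agrees with $gv_N$ at the LGL nodes, the discrete-norm equivalence \eqref{eq:norm_0N} yields
\[
\|\mathrm{I}_N(gv_N)\|_0\leq c_L\|gv_N\|_N\leq c_L\|g\|_{L^{\infty}(\Omega)}\|v_N\|_N\leq c_L^2\|g\|_{L^{\infty}(\Omega)}\|v_N\|_0,
\]
and hence $\|\mathrm{I}_N(gv_N)-gv_N\|_0\leq C\|g\|_{L^{\infty}(\Omega)}\|v_N\|_1$. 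For the $f'(u)$-summand I would combine Lemma~\ref{LM:interp} at regularity $1+\tau$ ($\tau\in(0,1)$), the interpolation-space inequality of Lemma~\ref{LM-interopation-sp} in the form $\|h\|_{1+\tau}\leq C\|h\|_2^{\tau}\|h\|_1^{1-\tau}$, and the inverse inequality of Lemma~\ref{LM-inverse} giving $\|v_N\|_2\leq CN^2\|v_N\|_1$. The product estimates $\|f'(u)v_N\|_2\leq C\|v_N\|_2$ and $\|f'(u)v_N\|_1\leq C\|v_N\|_1$ follow from the chain rule and the embedding $H^m(\Omega)\hookrightarrow C^2(\bar{\Omega})$ available for $m>3$, using $\mathbf{(A_1)}$ with $k\geq 4$. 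Chaining these estimates produces $\|\mathrm{I}_N(f'(u)v_N)-f'(u)v_N\|_0\leq CN^{\tau-1}\|v_N\|_1$, which tends to zero as $N\to\infty$ whenever $\tau<1$.

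Combining the two pieces gives $\eta(\delta_1,N)\leq C\|g\|_{L^{\infty}(\Omega)}+CN^{\tau-1}$; choosing $\delta_1$ small enough to make the first term at most $1/(4c_9)$ and $M$ large enough so that the second is at most $1/(4c_9)$ for $N>M$ completes the argument. I expect the main obstacle to be the $f'(u)$-summand: the inverse inequality inflates a factor $N^{2\tau}$ which must be offset by the $N^{-(1+\tau)}$ gain coming from the interpolation bound, so the exponent balance works only for $\tau<1$, and this delicate balance---together with the need for an $H^2$-product estimate on $f'(u)v_N$---is exactly what forces the regularity assumption $m>3$ through the embedding into $C^2(\bar{\Omega})$.
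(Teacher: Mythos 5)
Your proposal is correct and follows essentially the same route as the paper: the same perturbation identity relating $\mathcal{B}_N$ to $\mathcal{B}$, the same $N^{\tau-1}$ bound on $\|f'(u)v_N-\mathrm{I}_N(f'(u)v_N)\|_0$ obtained by chaining Lemma~\ref{LM:interp}, Lemma~\ref{LM-interopation-sp} and Lemma~\ref{LM-inverse} (which is exactly where $m>3$ enters via $u\in C^2(\bar{\Omega})$), and the same absorption into Lemma~\ref{LM3.4} with the resulting constant $2c_9$. The only cosmetic difference is that you merge the two $f'(w)-f'(u)$ contributions into a single summand $gv_N-\mathrm{I}_N(gv_N)$ using linearity of $\mathrm{I}_N$, whereas the paper estimates $\|f'(w)v_N-f'(u)v_N\|_0$ and $\|\mathrm{I}_N(f'(u)v_N)-\mathrm{I}_N(f'(w)v_N)\|_0$ separately.
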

\begin{proof}
Taking $\varphi\in X_{N}$, we have
\begin{align}
\mathcal{B}_N\big(w;v_N,\varphi\big)
 &= \mathcal{B}\big(w;v_N,\varphi\big)+\big(f'(w)v_N-\mathrm{I}_N\big(f'(w)v_N\big),\varphi \big) \nonumber \\
 &\geq \mathcal{B}\big(w;v_N,\varphi\big) - \big\|f'(w)v_N-f'(u)v_N\big\|_{0} \|\varphi\|_1 \nonumber \\
 &\quad\; -\big\|f'(u)v_N-\mathrm{I}_N\big(f'(u)v_N\big)\big\|_0\|\varphi\|_1 \nonumber \\
 &\quad\; - \big\|\mathrm{I}_N\big(f'(u)v_N\big)-\mathrm{I}_N\big(f'(w)v_N\big)\big\|_0\|\varphi\|_1. \label{eq:h337}
\end{align}
Since $f'$ is continuous from ${\bf(A_1)}$, we have for any $\varepsilon>0$, there exists a $\delta_0>0$, such that for any $w\in C(\bar{\Omega})$ with $\|u-w\|_{L^{\infty}(\Omega)}\leq\delta_0$,
\begin{equation}\label{eq:lm3.5:2}
 \big\|f'(w)v_N-f'(u)v_N\big\|_{0}\leq \big\|f'(w)-f'(u)\big\|_{L^{\infty}(\Omega)} \| v_N\|_1 \leq \varepsilon \| v_N \|_1.
\end{equation}
Moreover, for any $w\in C(\bar{\Omega})$ with $\|u-w\|_{L^{\infty}(\Omega)}\leq\delta_0$,
\begin{align}
\big\|\mathrm{I}_N\big(f'(u)v_N\big)-\mathrm{I}_N\big(f'(w)v_N\big) \big\|_0&\leq \big\|\mathrm{I}_N\big(f'(u)v_N\big)-\mathrm{I}_N\big(f'(w)v_N\big)\big\|_{N} \nonumber \\
&= \big\|f'(u)v_N-f'(w)v_N \big\|_{N}\nonumber \\
&\leq \big\|f'(w)-f'(u) \big\|_{L^{\infty}(\Omega)}\|v_N\|_N\nonumber\\
&\leq  c_L \varepsilon \|v_N\|_1. \label{eq:h339}
\end{align}
Afterwards, by virtue of $(\bf A_1)$, Lemma~\ref{LM-interopation-sp} and Lemma~\ref{LM-inverse}, we have for any fixed $\tau \in(0,1)$,
\begin{align*}
\big\|f'(u)v_N-\mathrm{I}_N\big(f'(u)v_N\big)\big\|_0
&\leq c_2 N^{-(1+\tau)}\big\|f'(u)v_N\big\|_{1+\tau}\\
&\leq C N^{-(1+\tau)}\big\|f'(u)v_N\big\|_{[H^2(\Omega),H^1(\Omega)]_{1-\tau}}\\
&\leq CN^{-(1+\tau)}\big\|f'(u)v_N\big\|_2^{\tau}\big\|f'(u)v_N\big\|_1^{1-\tau}\\
&\leq CN^{-(1+\tau)}\big\|v_N\big\|_2^{\tau}\big\|v_N\big\|_1^{1-\tau}\\
&\leq CN^{\tau-1}\|v_N\|_1,
\end{align*}
where the facts that $\big[H^2(\Omega),H^1(\Omega)\big]_{1-\tau}=H^{1+\tau}(\Omega)$ and $u\in H^m(\Omega)\hookrightarrow C^2(\bar{\Omega})$ $(m>3)$ are applied. Thus, for sufficiently large $N$, it yields
\begin{equation}\label{eq:h340}
\big\|f'(u)v_N-\mathrm{I}_N\big(f'(u)v_N\big)\big\|_0 \leq \varepsilon\|v_N\|_1.
\end{equation}
Combining (\ref{eq:h337})-(\ref{eq:h340}) leads to
$$
\mathcal{B}_N\big(w;v_N,\varphi\big)
 \geq \mathcal{B}\big(w;v_N,\varphi\big) - (2+c_L)\varepsilon\|v_N\|_1\|\varphi\|_1.
$$
Taking $\varepsilon=\frac{1}{2(2+c_L)c_9}$ and $\delta_1=\min\{\delta_0,\delta\}$ with $c_9$ and $\delta$ two constants in Lemma~\ref{LM3.4}, one obtains the result immediately.
\end{proof}

\begin{theorem}\label{TH3.4}
Assume that ${\bf(A_1)}$, ${\bf(A_2)}$ and ${\bf(A_3)}$ hold and $u\in H^{m}(\Omega)\cap H_0^1(\Omega)$ ($k\geq m>3$) is a solution of \eqref{HE1}. Then, for sufficiently large $N$, there exists a unique solution $u_N$ of \eqref{eq:HE3-weak} in the neighborhood
$$
\mathcal{N}_{\delta_1}(u)\cap{X_N}
=\big\{v\in X_N:\|v-u\|_{L^{\infty}(\Omega)}\leq\delta_1\big\},
$$
satisfying the error estimates \eqref{h531}, where $\delta_1>0$ is the constant in Lemma~\ref{LM3.5}.
\end{theorem}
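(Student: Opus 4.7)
The plan is to treat the existence and the error estimates as consequences of Theorem~\ref{TH3.3}, and to focus the new work on proving uniqueness via a difference argument built on the stability Lemma~\ref{LM3.5}. Indeed, the solution $u_N$ produced by Theorem~\ref{TH3.3} lies in the set $Q$, and the bound \eqref{eq:chi-bound} established in its proof shows that any $\chi\in Q$ satisfies $\|\chi-u\|_{L^{\infty}(\Omega)}=\mathrm{o}(1)$ as $N\to\infty$. Consequently, once $N$ is large, this $u_N$ automatically sits in $\mathcal{N}_{\delta_1}(u)\cap X_N$ and still obeys the estimates \eqref{h531}, so the only new task is to show that such a solution is unique in this neighborhood.

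Assume that $u_N^1,u_N^2\in\mathcal{N}_{\delta_1}(u)\cap X_N$ both satisfy \eqref{eq:HE3-weak}, and set $\eta=u_N^1-u_N^2\in X_N$. Subtracting the two weak formulations gives
\begin{equation*}
  \mathcal{A}(\eta,\varphi)=\big(\mathrm{I}_N(f(u_N^1)-f(u_N^2)),\varphi\big),\qquad\forall\varphi\in X_N.
\end{equation*}
Since $\|u-u_N^1\|_{L^{\infty}(\Omega)}\leq\delta_1$, Lemma~\ref{LM3.5} applied with $w=u_N^1$ is available for sufficiently large $N$ and yields
\begin{equation*}
  \|\eta\|_1\leq C\sup_{0\neq\varphi\in X_N}\frac{\mathcal{B}_N(u_N^1;\eta,\varphi)}{\|\varphi\|_1}.
\end{equation*}
Subtracting $\big(\mathrm{I}_N(f'(u_N^1)\eta),\varphi\big)$ from both sides of the previous identity and invoking the second-order Taylor expansion of $f$ around $u_N^1$ rewrites the numerator as
\begin{equation*}
  \mathcal{B}_N(u_N^1;\eta,\varphi)=-\tfrac{1}{2}\big(\mathrm{I}_N(f''(\xi)\eta^2),\varphi\big),
\end{equation*}
where $\xi$ is a measurable function taking values between $u_N^1$ and $u_N^2$, and hence $\|\xi\|_{L^{\infty}(\Omega)}\leq\|u\|_{L^{\infty}(\Omega)}+\delta_1$.

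Exploiting ${\bf(A_1)}$ to bound $f''$ on the compact interval $[-\|u\|_{L^{\infty}(\Omega)}-\delta_1,\|u\|_{L^{\infty}(\Omega)}+\delta_1]$, the discrete--continuous equivalence \eqref{eq:norm_0N}, and the nodal inequality $|\eta(\x_{jk})^2|\leq\|\eta\|_{L^{\infty}(\Omega)}|\eta(\x_{jk})|$, one arrives at
\begin{equation*}
  \big|\mathcal{B}_N(u_N^1;\eta,\varphi)\big|
  \leq C_0\|\eta\|_{L^{\infty}(\Omega)}\|\eta\|_0\|\varphi\|_0
  \leq C_0\|\eta\|_{L^{\infty}(\Omega)}\|\eta\|_1\|\varphi\|_1,
\end{equation*}
with $C_0$ depending only on $u$ and $f$. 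Feeding this into the stability bound above and using $\|\eta\|_{L^{\infty}(\Omega)}\leq 2\delta_1$ produces $\|\eta\|_1\leq 2CC_0\delta_1\|\eta\|_1$. The main obstacle is precisely this absorption step: the $\delta_1$ delivered by Lemma~\ref{LM3.5} is not a priori small enough to guarantee $2CC_0\delta_1<1$. The fix is harmless, however, since Lemma~\ref{LM3.5} remains valid if $\delta_1$ is replaced by any smaller positive constant; one therefore shrinks $\delta_1$ once and for all so that $2CC_0\delta_1<\tfrac{1}{2}$, which forces $\eta\equiv 0$ and, together with the first paragraph, completes the proof of Theorem~\ref{TH3.4}.
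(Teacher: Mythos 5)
Your proposal is correct, and the existence part coincides with the paper's (both push the $H^1$ error of the Theorem~\ref{TH3.3} solution up to $L^{\infty}$ via inverse/projection estimates to place $u_N$ in $\mathcal{N}_{\delta_1}(u)\cap X_N$). The uniqueness part, however, takes a genuinely different route. You subtract the two discrete equations, peel off $\mathrm{I}_N\big(f'(u_N^1)\eta\big)$ to expose $\mathcal{B}_N(u_N^1;\cdot,\cdot)$, and control the quadratic Taylor remainder $-\tfrac12\mathrm{I}_N\big(f''(\xi)\eta^2\big)$ by $C_0\|\eta\|_{L^\infty}\|\eta\|_1\|\varphi\|_1$; uniqueness then follows by absorbing this into the inf--sup bound of Lemma~\ref{LM3.5}, which forces you to shrink $\delta_1$ so that $2CC_0\delta_1<\tfrac12$. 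The paper instead avoids any remainder at all: it writes $\mathrm{I}_Nf(\tilde u_N)-\mathrm{I}_Nf(u_N)$ exactly as $\mathrm{I}_N\int_0^1 f'\big(u_N+t(\tilde u_N-u_N)\big)(\tilde u_N-u_N)\,dt$, uses continuity of $w\mapsto\mathcal{B}_N(w;\cdot,\cdot)$ and the mean value theorem for integrals to find, for each test function $\varphi$, a point $\bar t$ on the segment with $\mathcal{B}_N\big(u_N+\bar t(\tilde u_N-u_N);\tilde u_N-u_N,\varphi\big)=0$, and then invokes Lemma~\ref{LM3.5} to conclude $\|\eta\|_1=0$ with no smallness condition on $\delta_1$. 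The trade-off is clear: your argument is the standard local-contraction proof, needs $f''$ (available since $k\geq4$ in ${\bf(A_1)}$), and yields uniqueness only in a possibly smaller neighborhood --- which is harmless here because Lemma~\ref{LM3.5} is stable under shrinking $\delta_1$ and the constant is non-explicit anyway, and you correctly flag and repair this point; the paper's exact-linearization argument preserves the full $\delta_1$ from Lemma~\ref{LM3.5} at the cost of a slightly more delicate step (the intermediate point $\bar t$ depends on $\varphi$, which the paper glosses over when passing to the supremum). Both are valid proofs of the stated theorem.
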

\begin{proof}
By Lemma~\ref{LM:PI-highorder}, Lemma~\ref{LM-inverse}, and Theorem~\ref{TH3.3}, for every sufficiently large $N$, there exists a $u_N\in X_N$ such that, for $1<s<3/2$,
\begin{align*}
\|u-u_N\|_{L^{\infty}(\Omega)}
 &\leq C\|u-u_N\|_s \\
 &\leq C\big\|u-\mathrm{\Pi}_N^{s,0}u\big\|_s+\big\|\mathrm{\Pi}_N^{s,0}u-u_N\big\|_s \\
 &\leq CN^{s-m}+CN^{2(s-1)}\big\|\mathrm{\Pi}_N^{s,0}u-u_N\big\|_1 \\
 &\leq CN^{s-m}+CN^{2(s-1)}\left(\big\|u-\mathrm{\Pi}_N^{s,0}u\big\|_1+\|u-u_N\|_1\right) \\
 &\leq CN^{s-m}+CN^{2(s-1)}N^{1-m} \\
 &=\mathrm{o}(1).
\end{align*}
This implies that $u_N\in\mathcal{N}_{\delta_1}(u)\cap{X_N}$ for $N$ large enough.

It remains to verify the uniqueness of solutions to \eqref{eq:HE3-weak} in the neighborhood $\mathcal{N}_{\delta_1}(u)\cap{X_N}$. Assume that $u_N,\tilde{u}_N\in \mathcal{N}_{\delta_1}(u)\cap{X_N}$ are two solutions of \eqref{eq:HE3-weak}, we have
$$
u_N+t(\tilde{u}_N-u_N)\in \mathcal{N}_{\delta_1}(u)\cap{X_{N}}, \quad \forall t\in[0,1].
$$
Note that for any $w,\bar{w}\in\mathcal{N}_{\delta_1}(u)\cap{X_N}$,
\begin{align*}
 \big|\mathcal{B}_N\big(w;v_N,\varphi\big)-\mathcal{B}_N\big(\bar{w};v_N,\varphi\big)\big|
 &= \big|\big(\mathrm{I}_N(f'(w)v_N)-\mathrm{I}_N(f'(\bar{w})v_N),\varphi\big)\big| \\
 &\leq \big\|f'(w)v_N-f'(\bar{w})v_N\big\|_N \|\varphi\|_0 \\
 &\leq c_L\big\|f'(w)-f'(\bar{w})\big\|_{L^{\infty}(\Omega)} \|v_N\|_0\|\varphi\|_0,
 \quad \forall v_N,\varphi\in X_N.
\end{align*}
The regularity of $f$ implies that $\mathcal{B}_N\big(w;v_N,\varphi\big)$ is continuous with respect to $w$ in $\mathcal{N}_{\delta_1}(u)\cap{X_N}$. Thus, $\mathcal{B}_N\big(u_N+t(\tilde{u}_N-u_N);\tilde{u}_N-u_N,\varphi\big)$ is continuous with respect to $t\in[0,1]$. This means that for any $\varphi\in X_N$, there exists a $\bar{t}\in[0,1]$ such that
\begin{align*}
\int_0^1\mathcal{B}_N \big(u_N+t(\tilde{u}_N-u_N);\tilde{u}_N-u_N,\varphi \big)dt=\mathcal{B}_N\big(u_N+\bar{t}(\tilde{u}_N-u_N);\tilde{u}_N-u_N,\varphi\big).
\end{align*}
On the other hand,  a direct calculation yields
\begin{align*}
&\;\quad \int_0^1\mathcal{B}_N \big(u_N+t(\tilde{u}_N-u_N);\tilde{u}_N-u_N,\varphi \big)dt \\
&=\mathcal{A}(\tilde{u}_N-u_N,\varphi)-\left(\mathrm{I}_N\int_0^1f'\big(u_N+t(\tilde{u}_N-u_N)\big)(\tilde{u}_N-u_N)dt,\varphi\right) \\
&=\mathcal{A}(\tilde{u}_N-u_N,\varphi)-\big(\mathrm{I}_Nf(\tilde{u}_N)-\mathrm{I}_Nf(u_N),\varphi\big)=0,
\quad \forall \varphi\in X_N.
\end{align*}
Thus, for any $\varphi\in X_N$, there exists a $\bar{t}\in[0,1]$ such that $\mathcal{B}_N\big(u_N+\bar{t}(\tilde{u}_N-u_N);\tilde{u}_N-u_N,\varphi\big)=0$. Further, by Lemma~\ref{LM3.5}, it holds
\begin{equation}\label{eq:uniquess2}
\|u_N-\tilde{u}_N\|_1 \leq C \sup_{0\neq\varphi\in{X_N}} \frac{\mathcal{B}_N\big(u_N+\bar{t}(\tilde{u}_N-u_N);\tilde{u}_N-u_N,\varphi\big)}{\|\varphi\|_1} =0.
\end{equation}
Consequently, $u_N=\tilde{u}_N$. Theorem~\ref{TH3.4} is proved.
\end{proof}

\section{Numerical experiments}
\label{sec:4}
In this section, we apply the SGSEM to compute multiple solutions of the model problem \eqref{HE1}. The computational domain is taken as $\Omega=(0,\pi)^2$. Then, all the eigenvalues and the corresponding orthonormal eigenfunctions of $-\Delta$ in $\Omega$ with the homogeneous Dirichlet boundary condition are
$$
  \lambda_{mn}=m^2+n^2,\quad \phi_{mn}(\x)=(2/\pi)\sin(mx)\sin(ny),\quad m,n=1,2,\cdots.
$$
We usually take $N=32$ in our numerical computation unless specified.

\subsection{The case of cubic nonlinearity}

First, we consider the case of cubic nonlinearity to facilitate comparison with existing theoretical \cite{ZYZ13} and numerical results \cite{CX,XC,ZYZ13}. Taking $f(u)=u^3$, the model problem \eqref{HE1} becomes
\begin{align}\label{eq:cubic}
 \left\{\begin{array}{rll}
  \Delta u+u^3=0 &\mbox{in}& \Omega,\\
  u=0 & \mbox{on} & \partial\Omega.
 \end{array}\right.
\end{align}
Equation \eqref{eq:cubic} is often referred to as the Lane-Emden equation. It is known in the literature that the structure and distribution of the multiple solutions of \eqref{eq:cubic} are very complex. Actually, \eqref{eq:cubic} has an infinite number of solutions \cite{cys}. In order to compute multiple solutions of \eqref{eq:cubic}, we construct initial guesses according to the multiplicity of eigenvalues of $-\Delta$. More precisely, in Step~2 of the algorithm of SGSEM, we search all solutions to the following subproblem: find $u^0\in S_\lambda$ such that
\begin{equation}\label{e58}
(\nabla u^0,\nabla v)=\big((u^0)^3,v\big),\quad \forall v\in S_\lambda,
\end{equation}
where $S_\lambda$ is the subspace of $H_0^1(\Omega)$ spanned by all eigenfunctions corresponding to a given eigenvalue $\lambda$. According to \cite{CX,XC,ZYZ13}, when $\lambda$ is a $q$-fold eigenvalue with corresponding eigenfunctions $\phi_1,\phi_2,\cdots,\phi_q$, the nonlinear problem \eqref{e58} has exactly $3^q-1$ nontrivial solutions of the form $u^0=a_1\phi_1+a_2\phi_2+\cdots+a_q\phi_q$, where
\begin{equation}
  a_i=\pm 4\sqrt{\lambda/(3(4r-1))},\quad\forall i\in\mathcal{I};\qquad
  a_i=0,\quad \forall i\in\{1,2,\cdots,q\}\backslash\mathcal{I},
\end{equation}
with $\mathcal{I}$ denoting any nonempty subset of $\{1,2,\cdots,q\}$ and $r=|\mathcal{I}|$ the cardinality of $\mathcal{I}$. The numerical results show that by searching for initial guesses in the above manner, multiple solutions to the model problem \eqref{eq:cubic} can be efficiently computed according to specific rules, such as the order of eigenvalues or their multiplicities from small to large. Moreover, only a small nonlinear algebraic system needs to be solved when searching for initial guesses. In fact, these are the two advantages of the SEM for computing multiple solutions. However, when $f(u)$ in \eqref{HE1} is more complicated, it is generally necessary to use Step~3 in the SGSEM algorithm to get a better initial guess.

{\bf (i) The single-fold eigenvalue case.}
By taking $\lambda_{mm}=2m^2$ and $u^0=a\phi_{mm}$ in \eqref{e58}, one obtains two nontrivial roots $a=\pm4\sqrt{2}m/3$. Then we solve for the model problem \eqref{HE1} by the SGSEM with the initial guess $u_{mm}^0=a\phi_{mm}$. Denote the corresponding solution by $u_{mm}$. Fig.~\ref{fig:u112244} presents the pcolor profiles of three solutions $u_{11}$, $u_{22}$ and $u_{44}$ and their initial guesses. One observes that each solution is very similar to the corresponding initial guess in shape, whereas possesses higher peaks and slenderer ``waists''.

\begin{figure}[!ht]
  \centering
  \quad\includegraphics[width=1.6in,height=1.0in]{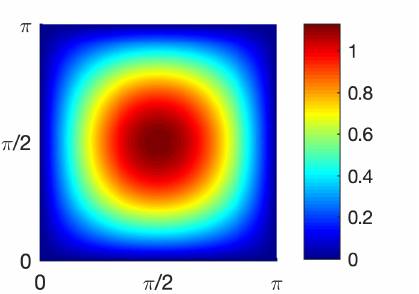}
  \quad\includegraphics[width=1.6in,height=1.0in]{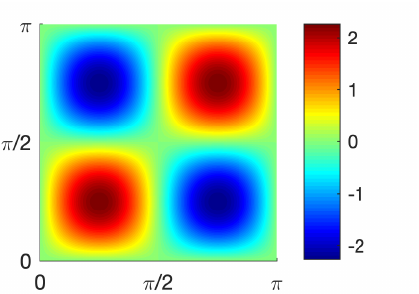}
  \quad\includegraphics[width=1.6in,height=1.0in]{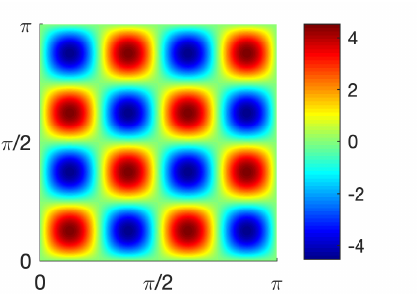} \\
  \quad\includegraphics[width=1.6in,height=1.0in]{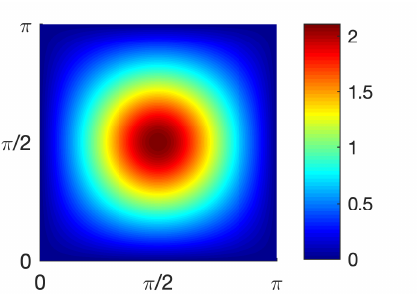}
  \quad\includegraphics[width=1.6in,height=1.0in]{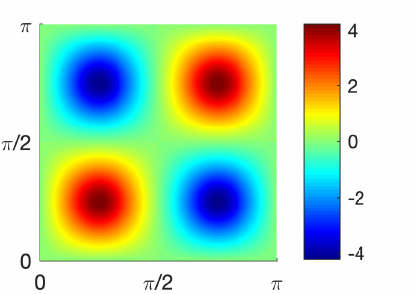}
  \quad\includegraphics[width=1.6in,height=1.0in]{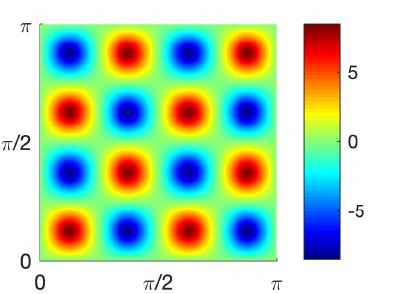}
  \caption{Three initial guesses $u^0_{mm}$ (top row) and the corresponding solutions $u_{mm}$ (bottom row) for $m=1$ (left column), $m=2$ (middle column) and $m=4$ (right column).}
  \label{fig:u112244}
\end{figure}

{\bf (ii) The 2-fold eigenvalue case.}
Consider the 2-fold eigenvalue $\lambda_{mn}=\lambda_{nm}=m^2+n^2$ for two different positive integers $m$ and $n$. Taking $u^0=a\phi_{mn}+b\phi_{nm}$ in \eqref{e58} leads to eight nontrivial roots: $a=\pm\alpha,b=0$; $a=0,b=\pm\alpha$; $a=b=\pm\beta$; $a=-b=\pm\beta$. Here $\alpha=4\sqrt{\lambda_{mn}}/3$ and $\beta=4\sqrt{\lambda_{mn}/21}$. Thus, there exist eight initial guesses, i.e., $\pm\alpha\phi_{mn}$, $\pm\alpha\phi_{nm}$, $\pm\beta(\phi_{mn}+\phi_{nm})$ and $\pm\beta(\phi_{mn}-\phi_{nm})$, which correspond to eight solutions of \eqref{HE1}. Due to the symmetries of the model problem \eqref{HE1}, we only present three solutions $u_{mn}$, $u_{mn+nm}$ and $u_{mn-nm}$ with respect to the initial guesses $\alpha\phi_{mn}$, $\beta(\phi_{mn}+\phi_{nm})$ and $\beta(\phi_{mn}-\phi_{nm})$, respectively.  Fig.~\ref{fig:u121315} presents the solutions with $(m,n)=(1,2)$, $(1,3)$ and $(1,5)$.

\begin{figure}[!t]
  \centering
  \quad\includegraphics[width=1.6in,height=1.0in]{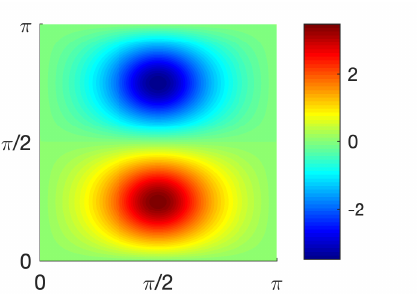}
  \quad\includegraphics[width=1.6in,height=1.0in]{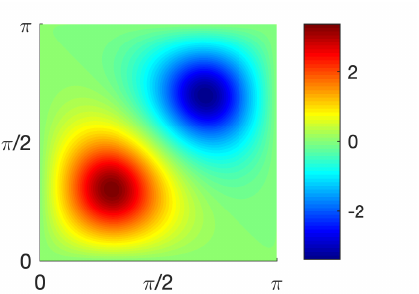}
  \quad\includegraphics[width=1.6in,height=1.0in]{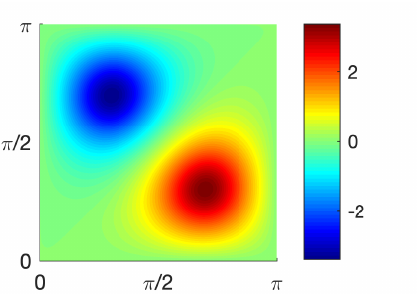} \\
  \quad\includegraphics[width=1.6in,height=1.0in]{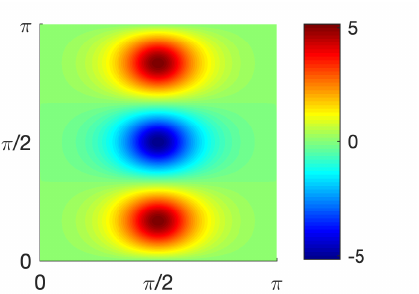}
  \quad\includegraphics[width=1.6in,height=1.0in]{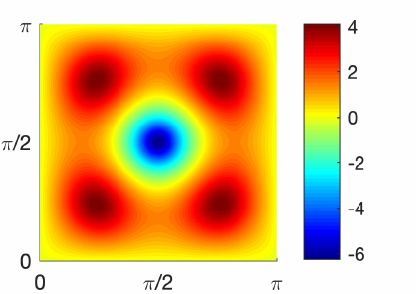}
  \quad\includegraphics[width=1.6in,height=1.0in]{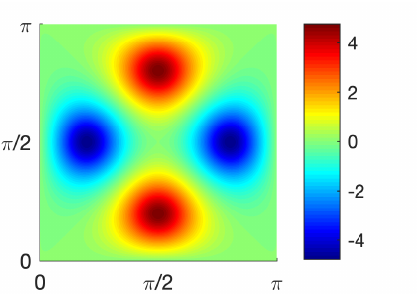} \\
  \quad\includegraphics[width=1.6in,height=1.0in]{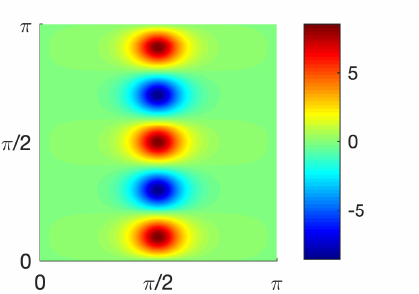}
  \quad\includegraphics[width=1.6in,height=1.0in]{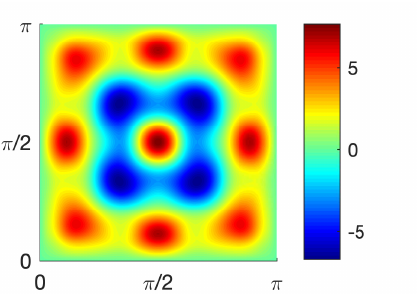}
  \quad\includegraphics[width=1.6in,height=1.0in]{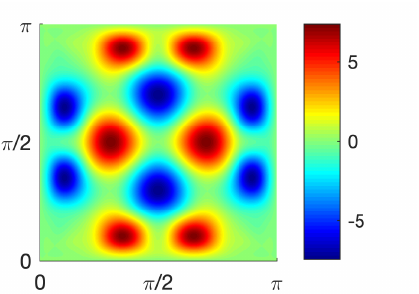}
  \caption{The solutions $u_{mn}$ (left column), $u_{mn+nm}$ (middle column) and $u_{mn-nm}$ (right column) for $(m,n)=(1,2)$ (top row), $(m,n)=(1,3)$ (middle row) and $(m,n)=(1,5)$ (bottom row).}
  \label{fig:u121315}
\end{figure}

{\bf (iii) The 3-fold eigenvalue case.}
Consider the smallest 3-fold eigenvalue $\lambda_{17}=\lambda_{71}=\lambda_{55}=50=\lambda$. Taking $u^0=a\phi_{17}+b\phi_{71}+c\phi_{55}$ in \eqref{e58}, one obtains $3^3-1=26$ different initial guesses. Fig.~\ref{fig:u175571} shows six solutions of \eqref{HE1} with respect to this eigenvalue.

\begin{figure}[!t]
  \centering
  (a)\includegraphics[width=1.6in,height=1.0in]{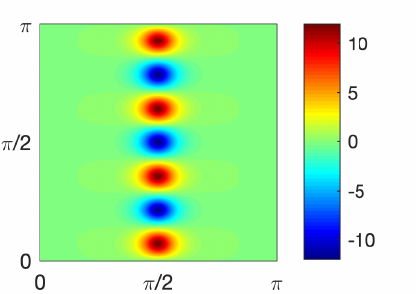}
  (b)\includegraphics[width=1.6in,height=1.0in]{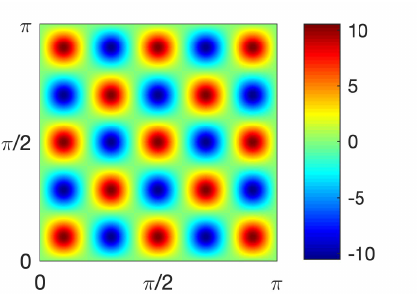}
  (c)\includegraphics[width=1.6in,height=1.0in]{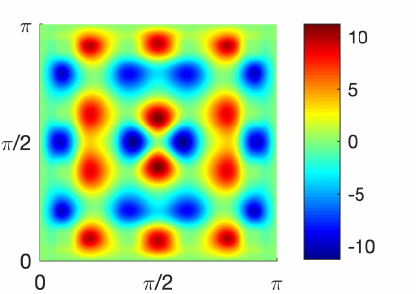} \\
  (d)\includegraphics[width=1.6in,height=1.0in]{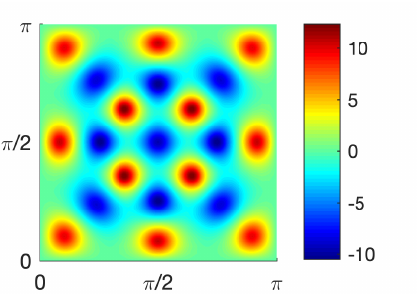}
  (e)\includegraphics[width=1.6in,height=1.0in]{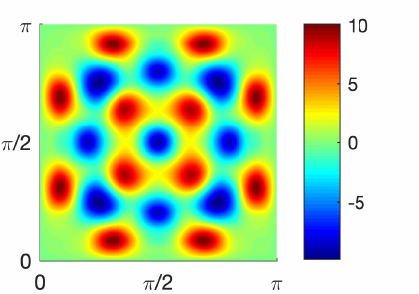}
  (f)\includegraphics[width=1.6in,height=1.0in]{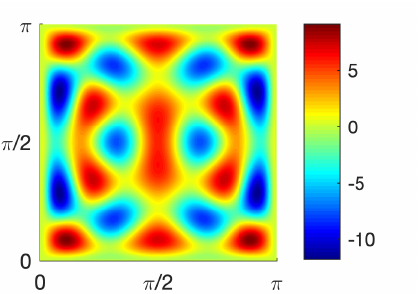}
  \caption{Six solutions corresponding to the least 3-fold eigenvalue $\lambda=50$: (a) $u_{17}$, (b) $u_{55}$, (c) $u_{17-71}$, (d) $u_{17+55+71}$, (e) $u_{17-55+71}$ and (f) $u_{17+55-71}$.}
  \label{fig:u175571}
\end{figure}

{\bf (iv) The 4-fold eigenvalue case.}
Consider the smallest 4-fold eigenvalue $\lambda_{18}=\lambda_{81}=\lambda_{47}=\lambda_{74}=65=\lambda$. Taking $u^0=a\phi_{18}+b\phi_{81}+c\phi_{47}+d\phi_{74}$ in \eqref{e58}, one gets $3^4-1=80$ different initial guesses. Three solutions corresponding to this 4-fold eigenvalue are computed with $N=42$ and shown in Fig.~\ref{fig:u18477481}.

\begin{figure}[!t]
  \centering
  \quad\includegraphics[width=1.6in,height=1.0in]{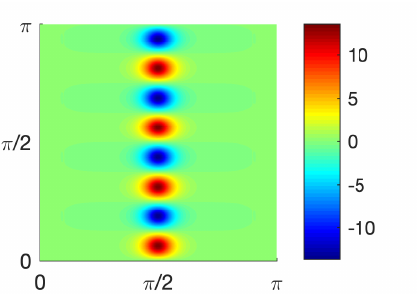}
  \quad\includegraphics[width=1.6in,height=1.0in]{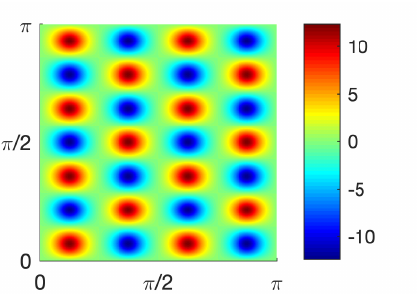}
  \quad\includegraphics[width=1.6in,height=1.0in]{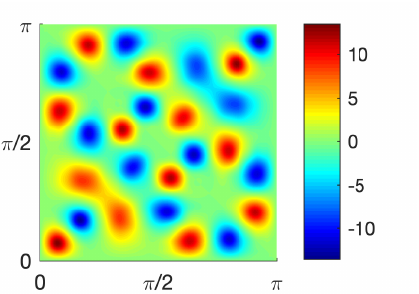}
  \caption{Three solutions corresponding to the least 4-fold eigenvalue $\lambda=65$: $u_{18}$ (left), $u_{47}$ (middle) and $u_{47+74}$ (right).}
  \label{fig:u18477481}
\end{figure}

Next, we illustrate the spectral convergence rate of the SGSEM. Without loss of generality, we consider the errors for the numerical solutions $u_{11}$ and $u_{12+21}$. Notice that the ``exact" solutions of $u_{11}$ and $u_{12+21}$ are computed with a very large spectral-Galerkin approximation space, say $N=100$. Fig.~\ref{fig:u_errs} demonstrates the $L^2$- and $H^1$-errors for $u_{11}$ and $u_{12+21}$ in the semilogarithmic scale, from which the exponential convergence rates are observed. This indicates that our approach is efficient and exponentially convergent for computing multiple solutions.

\begin{figure}[!t]
  \centering
  \includegraphics[width=.4\textwidth]{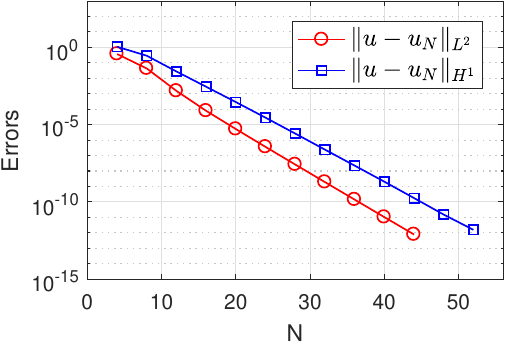}\qquad
  \includegraphics[width=.4\textwidth]{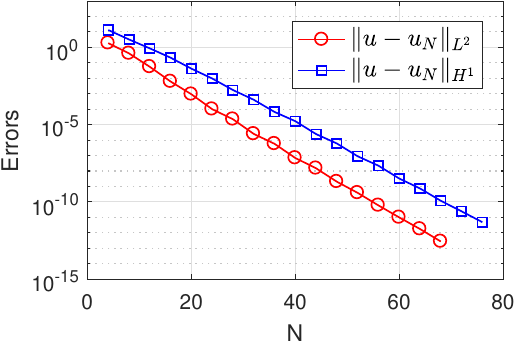}
  \caption{Exponential convergence rate in $L^2(\Omega)$ and $H^1(\Omega)$ of SGSEM for solutions $u=u_{11}$ (left) and $u=u_{12+21}$ (right).}
  \label{fig:u_errs}
\end{figure}

Further, we compare the efficiency of the SEM using the interpolated coefficient Legendre-Galerkin spectral method (ICLG) and the interpolated coefficient bilinear rectangular finite element method (ICFEM) \cite{XC}. Both algorithms use the same initial guess, Newton-type numerical extension method, and residual termination criterion (i.e., the Newton iteration is terminated when the discrete $l^2$ error of the residual is less than $10^{-10}$) to compute the same solution. Tables \ref{tab:err-u11} and \ref{tab:err-u12+21} list the errors of $u_{11}$ and $u_{12+21}$ computed by the two algorithms under different grids, respectively. The numerical results show that the $L^2$- and $H^1$-errors corresponding to the ICFEM have second- and first-order accuracy, respectively, while the ICLG has spectral convergence accuracy. In addition, for the computation of $u_{11}$, the SEM based on the ICFEM discretization uses $48\times48$ degrees of freedom making the $L^2$- and $H^1$-errors reach $1.85\times 10^{-3}$ and $6.28\times 10^{-2}$, respectively, with a total computational time of $4.36$ seconds, while the SEM based on the ICLG discretization uses only $16\times16$ degrees of freedom making the $L^2$- and $H^1$-errors reach $7.8\times 10^{-5}$ and $2.81\times 10^{-3}$, respectively, with a total computational time of $0.08$ seconds. Similar results are also observed in the computation of $u_{12+21}$. These observations show that, in terms of the degrees of freedom and CPU times required for the numerical solution to reach the same level of accuracy, the efficiency of the SEM based on the ICLG discretization is significantly higher than that of the SEM based on the ICFEM discretization.

\begin{table}[!t]
\centering\small
\caption{The errors of $u=u_{11}$ computed by the two algorithms under different grids.}
\label{tab:err-u11}
\begin{tabular}{cccccccc}
\hline
Algorithm & $N$ & 8 & 16 & 24 & 32 & 40 & 48 \\
\hline
\multirow{2}{*}{ICFEM}
& $\|u-u_N\|_{L^2}$ & 5.7343E-02 & 1.6048E-02 & 7.2947E-03 & 4.1363E-03 & 2.6566E-03 & 1.8488E-03 \\
& $\|u-u_N\|_{H^1}$ & 3.9433E-01 & 1.9156E-01 & 1.2663E-01 & 9.4204E-02 & 7.5320E-02 & 6.2776E-02 \\
\hline
\multirow{2}{*}{ICLG}
& $\|u-u_N\|_{L^2}$ & 4.2857E-02 & 7.8021E-05 & 3.6199E-07 & 1.9073E-09 & 1.0339E-11 & 1.2248E-13 \\
& $\|u-u_N\|_{H^1}$ & 2.8632E-01 & 2.8138E-03 & 2.8109E-05 & 2.4095E-07 & 2.0338E-09 & 1.5095E-11 \\
\hline
\end{tabular}
\end{table}

\begin{table}[!t]
\centering\small
\caption{The errors of $u_{12+21}$ computed by the two algorithms under different grids.}
\label{tab:err-u12+21}
\begin{tabular}{cccccccc}
\hline
Algorithm & $N$ & 20 & 30 & 40 & 50 & 60 & 70 \\
\hline
\multirow{2}{*}{ICFEM}
& $\|u-u_N\|_{L^2}$ & 3.6358E-02 & 1.6732E-02 & 9.5300E-03 & 6.1350E-03 & 4.2741E-03 & 3.1449E-03 \\
& $\|u-u_N\|_{H^1}$ & 6.0492E-01 & 3.9613E-01 & 2.9930E-01 & 2.3636E-01 & 1.9769E-01 & 1.6906E-01 \\
\hline
\multirow{2}{*}{ICLG}
& $\|u-u_N\|_{L^2}$ & 9.2365E-04 & 8.1137E-06 & 6.9518E-08 & 7.1984E-10 & 9.9249E-12 & 4.9298E-13 \\
& $\|u-u_N\|_{H^1}$ & 4.2015E-02 & 8.9500E-04 & 1.6794E-05 & 2.4773E-07 & 3.2591E-09 & 4.4562E-11 \\
\hline
\end{tabular}
\end{table}

\subsection{The case of triangular nonlinearity}

The SGSEM proposed in this paper can also be applied to the computation of multiple solutions of semilinear elliptic problems with other types of nonlinear terms. We now consider a non-polynomial semilinear case. Taking $f(u)=\kappa\sin u$ ($\kappa>0$), the corresponding semilinear problem \eqref{HE1} becomes
\begin{align}\label{eq:sinG}
 \left\{\begin{array}{rll}
  \Delta u+\kappa\sin u=0 &\mbox{in}& \Omega,\\
  u=0 & \mbox{on} & \partial\Omega.
 \end{array}\right.
\end{align}
Equation \eqref{eq:sinG} is usually called the elliptic sine-Gordon equation \cite{CDHNZ2004CM}, and can also be regarded as a generalization of the one-dimensional Euler problem for the bending of compressed bars \cite{CXbook}. Notice that here $\kappa>0$ is a constant and the nonlinear function $f(u)=\kappa\sin u$ is a bounded odd function.

It is worth noting that the existence and distribution of multiple solutions of the boundary value problem \eqref{eq:sinG} are related to the value of the parameter $\kappa$. Arranging the eigenvalues of $-\Delta$ in ascending order:
\[ 0<\lambda_1<\lambda_2\leq\lambda_3\leq\cdots\leq\lambda_j\leq\cdots, \]
the first ten eigenvalues are listed in Table~\ref{tab:sinG_mnl}. According to \cite{CDHNZ2004CM}, when $0<\kappa<\lambda_1$, the boundary value problem \eqref{eq:sinG} has only zero solution $u\equiv0$; when $\kappa>\lambda_1$, \eqref{eq:sinG} has at least two nontrivial solutions, including a positive solution $u$ and its corresponding negative solution $-u$; when $\kappa>\lambda_2=\lambda_3$, \eqref{eq:sinG} has at least six nontrivial solutions and sign-changing solutions appear; and generally, when $\kappa>\lambda_k$ ($k\geq1$), \eqref{eq:sinG} has at least $k$ pairs of nontrivial solutions (i.e., $2k$ nontrivial solutions).

\begin{table}[!ht]
\centering\small
\caption{The first ten eigenvalues of $-\Delta$, $\lambda_j=m_j^2+n_j^2$ ($j=1,2,\ldots,10$), and the corresponding $(m_j,n_j)$.}
\label{tab:sinG_mnl}
\begin{tabular}{ccccccccccc}
\hline
$j$ & 1 & 2 & 3 & 4 & 5 & 6 & 7 & 8 & 9 & 10 \\
\hline
$m_j$ & 1 & 2 & 1 & 2 & 3 & 1 & 3 & 2 & 4 & 1 \\
$n_j$ & 1 & 1 & 2 & 2 & 1 & 3 & 2 & 3 & 1 & 4 \\
$\lambda_j=m_j^2+n_j^2$ & 2 & 5 & 5 & 8 & 10 & 10 & 13 & 13 & 17 & 17 \\
\hline
\end{tabular}
\end{table}

In the following, we use the SGSEM to compute multiple solutions to the problem \eqref{eq:sinG} with different values of the parameter $\kappa$. The following five types of values of $\kappa$ are considered: 
\begin{enumerate}[(i)]
\setlength\itemsep{-.6ex}
\item $\kappa=1$ ($0<\kappa<\lambda_1$);
\item $\kappa=4$ ($\lambda_1<\kappa<\lambda_2=\lambda_3$);
\item $\kappa=6$ ($\lambda_3<\kappa<\lambda_4$);
\item $\kappa=9$ ($\lambda_4<\kappa<\lambda_5=\lambda_6$);
\item $\kappa=11$ ($\lambda_6<\kappa<\lambda_7$).
\end{enumerate}
In Steps 2 and 3 of the SGSEM algorithm, we directly use the linear combinations of eigenfunctions corresponding to the first ten eigenvalues of $-\Delta$ to search for the initial guesses of multiple solutions. Setting
\[ u^0=\sum_{j=1}^{10}a_j\phi_j, \]
where
\[ \phi_j=\frac{2}{\pi}\sin(m_jx)\sin(n_jy), \]
with informations for $(m_j,n_j)$ listed in Table~\ref{tab:sinG_mnl}, find the coefficients $(a_1,a_2,\ldots,a_{10})$ such that
\begin{align*}
  \left(\nabla u^0,\nabla v\right)-\kappa\left(\sin u^0,v\right)=0,\quad \forall v\in\spn\{\phi_1,\phi_2,\ldots,\phi_{10}\},
\end{align*}
i.e.,
\begin{align}\label{eq:sinG-aeq}
  \lambda_ia_i-\kappa\bigg(\sin\bigg(\sum_{j=1}^{10}a_j\phi_j\bigg),\phi_i\bigg) = 0,\quad i=1,2,\ldots,10.
\end{align}
For different values of the parameter $\kappa$, we apply a random Newton method (i.e., the Newton iterative method with randomly selected initial guesses) to search for nonzero solutions of the nonlinear system \eqref{eq:sinG-aeq} in the range of $|a_j|\leq 10$. The results are listed in Table~\ref{tab:sinG_a}, where each set of coefficients with opposite signs is also a solution to \eqref{eq:sinG-aeq}. Our computations show that the initial guess corresponding to each set of coefficients can effectively obtain the corresponding numerical solution of \eqref{eq:sinG} through Steps 4 and 5 of the SGSEM algorithm. Numerical solutions corresponding to $\kappa=6$ and $\kappa=11$ are plotted in Figs.~\ref{fig:sinG_k6_5sols} and \ref{fig:sinG_k11_10sols}, respectively.

From Table~\ref{tab:sinG_a} and Figs.~\ref{fig:sinG_k6_5sols}-\ref{fig:sinG_k11_10sols}, we have the following observations:

(1) When $0<\kappa<\lambda_1$, there is no nonzero solution.

(2) The number of solutions increases as the parameter $\kappa$ increases (see Table~\ref{tab:sinG_a}). In view of this, we propose the following conjecture:

{\bf Conjecture 1.} When $\kappa>\lambda_k$ ($k\geq1$), if the number of $q$-fold ($q=1,2,\ldots$) eigenvalues in $\{\lambda_1,\ldots,\lambda_k\}$ is $n_q^{(k)}\geq0$, the number of nontrivial solutions to the problem \eqref{eq:sinG} is at least $\sum_{q=1}^{\infty} n_q^{(k)}(3^q-1)$.

Obviously, when $\kappa>\lambda_k$ ($k\geq1$) and $\{\lambda_1,\ldots,\lambda_k\}$ contains at least a $q^*$-fold ($q^*\geq2$) eigenvalue, the number of nontrivial solutions should be greater than $2k$ due to $\sum_{q=1}^{\infty} n_q^{(k)}=k$ and $3^{q^*}-1>2$.

(3) Each numerical solution possesses the characteristics of ``lower peaks" and ``thicker waists" compared with the principal term of the corresponding initial guess. This is exactly the opposite of the characteristics of solutions to the cubic nonlinear problem.

(4) When $\kappa>\lambda_1$, the model problem has a unique positive solution (see Figs.~\ref{fig:sinG_k6_5sols}(a) and \ref{fig:sinG_k11_10sols}(a)), correspondingly, a unique negative solution, and all other nontrivial solutions are sign-changing. It can be observed from Figs.~\ref{fig:sinG_k6_5sols}(a) and \ref{fig:sinG_k11_10sols}(a) that when the parameter $\kappa$ increases, the value of the positive solution inside the domain tends to become a constant which is not greater than $\pi$ (the boundary layers appear).

\begin{table}[!ht]
\centering\footnotesize
\caption{Coefficients in the initial guesses of multiple solutions for different $\kappa$ and labels of the corresponding numerical solutions shown in Figs.~\ref{fig:sinG_k6_5sols} and \ref{fig:sinG_k11_10sols}. $n_{\mathrm{sol}}$ denotes the number of solutions to \eqref{eq:sinG-aeq}.}
\label{tab:sinG_a}
\begin{tabular}{cccccccccccc}
\hline
$\kappa$($n_{\mathrm{sol}}$) & $a_1$ & $a_2$ & $a_3$ & $a_4$ & $a_5$ & $a_6$ & $a_7$ & $a_8$ & $a_9$ & $a_{10}$ & Figure \\
\hline
1(0)
& - & - & - & - & - & - & - & - & - & - & \\
\hline
4(2)
& 4.2725 & 0 & 0 & 0 & 0.2644 & 0.2644 & 0 & 0 & 0 & 0 & \\
\hline
\multirow{5}{*}{6(10)}
& 5.2981 & 0 & 0 & 0 & 0.5264 & 0.5264 & 0 & 0 & 0 & 0 & Fig.~\ref{fig:sinG_k6_5sols}(a) \\
& 0 & 2.201 & 0 & 0 & 0 & 0 & 0 & 0.0812 & 0 & 0 & Fig.~\ref{fig:sinG_k6_5sols}(b) \\
& 0 & 0 & 2.201 & 0 & 0 & 0 & 0.0812 & 0 & 0 & 0 & Fig.~\ref{fig:sinG_k6_5sols}(c) \\
& 0 & 1.4358 & 1.4358 & 0 & 0 & 0 & -0.0249 & -0.0249 & 0.033 & 0.033 & Fig.~\ref{fig:sinG_k6_5sols}(d) \\
& 0 & -1.4358 & 1.4358 & 0 & 0 & 0 & -0.0249 & 0.0249 & -0.033 & 0.033 & Fig.~\ref{fig:sinG_k6_5sols}(e) \\
\hline
\multirow{6}{*}{9(12)}
& 6.0381 & 0 & 0 & 0 & 0.819 & 0.819 & 0 & 0 & 0 & 0 & \\
& 0 & 3.9257 & 0 & 0 & 0 & 0 & 0 & 0.4159 & 0 & 0 & \\
& 0 & 0 & 3.9257 & 0 & 0 & 0 & 0.4159 & 0 & 0 & 0 & \\
& 0 & 0 & 0 & 1.7472 & 0 & 0 & 0 & 0 & 0 & 0 & \\
& 0 & 2.574 & 2.574 & 0 & 0 & 0 & -0.1543 & -0.1543 & 0.2076 & 0.2076 & \\
& 0 & -2.574 & 2.574 & 0 & 0 & 0 & -0.1543 & 0.1543 & -0.2076 & 0.2076 & \\
\hline
\multirow{10}{*}{11(20)}
& 6.3239 & 0 & 0 & 0 & 0.9659 & 0.9659 & 0 & 0 & 0 & 0 & Fig.~\ref{fig:sinG_k11_10sols}(a) \\
& 0 & 4.5006 & 0 & 0 & 0 & 0 & 0 & 0.5985 & 0 & 0 & Fig.~\ref{fig:sinG_k11_10sols}(b) \\
& 0 & 0 & 4.5006 & 0 & 0 & 0 & 0.5985 & 0 & 0 & 0 & Fig.~\ref{fig:sinG_k11_10sols}(c) \\
& 0 & 0 & 0 & 2.8345 & 0 & 0 & 0 & 0 & 0 & 0 & Fig.~\ref{fig:sinG_k11_10sols}(d) \\
& 0 & 0 & 0 & 0 & 1.574 & 0 & 0 & 0 & 0 & 0 & Fig.~\ref{fig:sinG_k11_10sols}(e) \\
& 0 & 0 & 0 & 0 & 0 & 1.574 & 0 & 0 & 0 & 0 & Fig.~\ref{fig:sinG_k11_10sols}(f) \\
& 0 & 2.9655 & 2.9655 & 0 & 0 & 0 & -0.2369 & -0.2369 & 0.3262 & 0.3262 & Fig.~\ref{fig:sinG_k11_10sols}(g) \\
& 0 & -2.9655 & 2.9655 & 0 & 0 & 0 & -0.2369 & 0.2369 & -0.3262 & 0.3262 & Fig.~\ref{fig:sinG_k11_10sols}(h) \\
& 0 & 0 & 0 & 0 & 1.0306 & -1.0306 & 0 & 0 & 0 & 0 & Fig.~\ref{fig:sinG_k11_10sols}(i) \\
& -0.0739 & 0 & 0 & 0 & 1.0039 & 1.0039 & 0 & 0 & 0 & 0 & Fig.~\ref{fig:sinG_k11_10sols}(j) \\
\hline
\end{tabular}
\vspace*{1ex}
\end{table}

\begin{figure}[!ht]
  \centering
  \includegraphics[width=.19\textwidth]{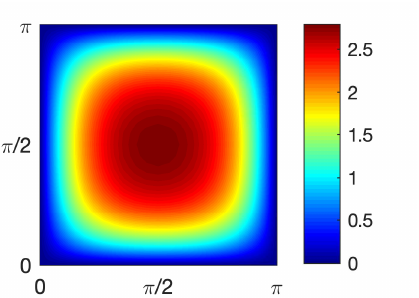}
  \includegraphics[width=.19\textwidth]{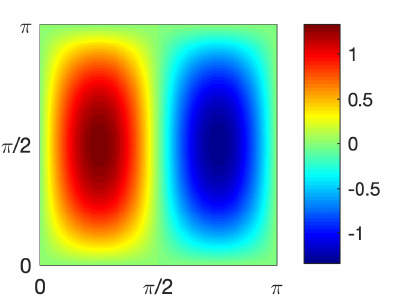}
  \includegraphics[width=.19\textwidth]{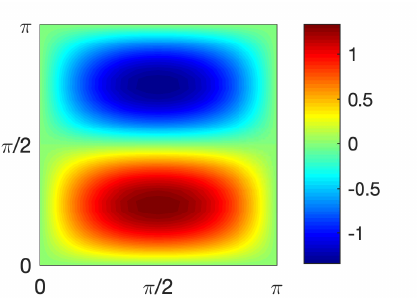}
  \includegraphics[width=.19\textwidth]{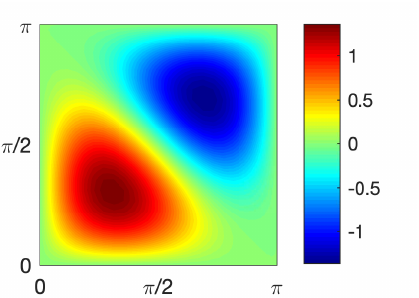}
  \includegraphics[width=.19\textwidth]{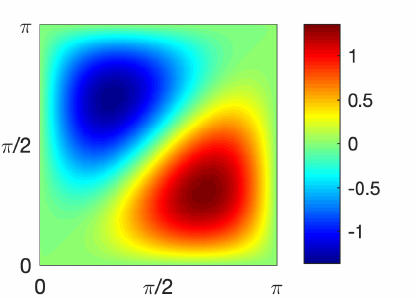} \\[-5pt]
  \hspace*{.06\textwidth}
  (a) \hfill
  (b) \hfill
  (c) \hfill
  (d) \hfill
  (e) \hspace*{.1\textwidth}
  \caption{Five solutions of the problem \eqref{eq:sinG} with $\kappa=6$.}
  \label{fig:sinG_k6_5sols}
\end{figure}

\begin{figure}[!t]
  \centering
  \includegraphics[width=.19\textwidth]{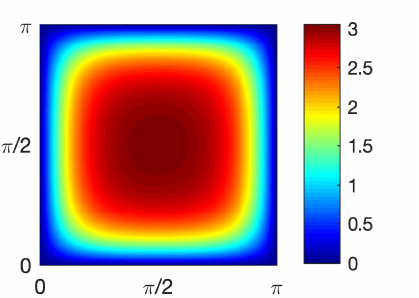}
  \includegraphics[width=.19\textwidth]{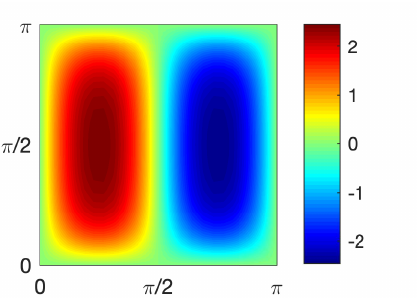}
  \includegraphics[width=.19\textwidth]{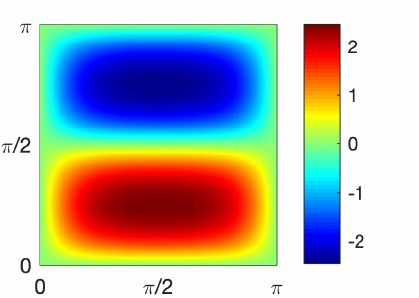}
  \includegraphics[width=.19\textwidth]{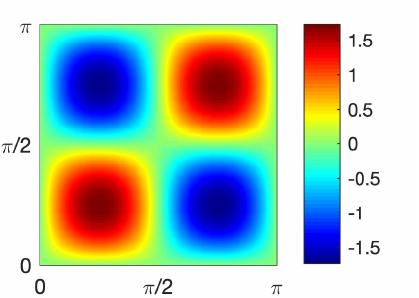}
  \includegraphics[width=.19\textwidth]{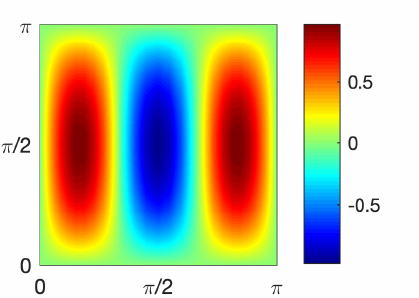} \\[-5pt]
  \hspace*{.06\textwidth}
  (a) \hfill
  (b) \hfill
  (c) \hfill
  (d) \hfill
  (e) \hspace*{.1\textwidth} \\
  \includegraphics[width=.19\textwidth]{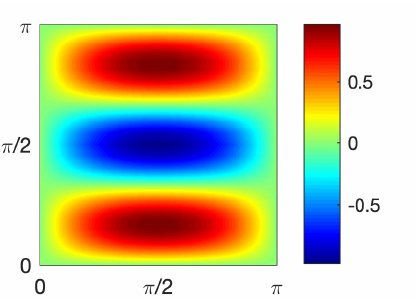}
  \includegraphics[width=.19\textwidth]{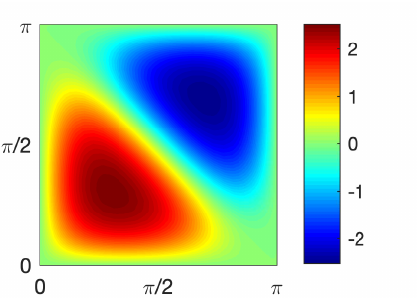}
  \includegraphics[width=.19\textwidth]{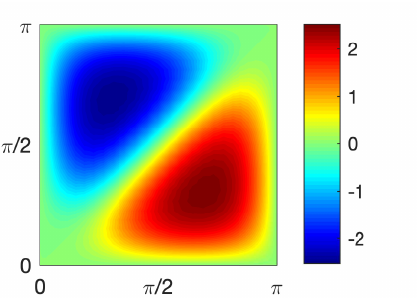}
  \includegraphics[width=.19\textwidth]{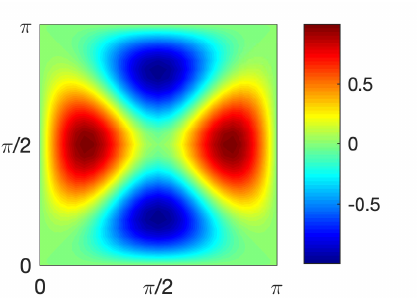}
  \includegraphics[width=.19\textwidth]{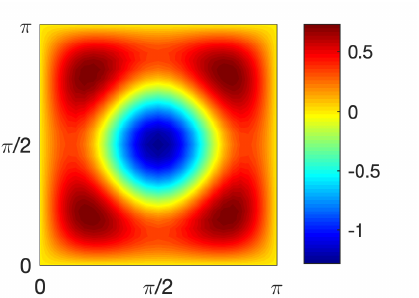} \\[-5pt]
  \hspace*{.06\textwidth}
  (f) \hfill
  (g) \hfill
  (h) \hfill
  (i) \hfill
  (j) \hspace*{.1\textwidth}
  \caption{Ten solutions of the problem \eqref{eq:sinG} with $\kappa=11$.}
  \label{fig:sinG_k11_10sols}
\end{figure}

\section{Conclusion}
\label{sec:5}
Inspired by the traditional SEM, we developed an efficient SGSEM with spectral accuracy for finding multiple solutions to a class of semilinear elliptic Dirichlet boundary value problems. The SGSEM combining with the interpolated coefficient method can reduce the degree of freedom in the numerical computation and make the resulting nonlinear algebraic system very simple and easy to solve, so it is more efficient than the traditional SEM based on the finite element method. More importantly, by employing the Schauder's fixed point theorem and other technical strategies, the existence, uniqueness and spectral convergence for the numerical solutions of the SGSEM corresponding to each specified true solution were rigorously proved. As a result, in any sufficiently small neighborhood of a specified true solution, the SGSEM can obtain a unique numerical solution with the spectral convergence accuracy. It is worthwhile to point out that, our analysis approach can be extended to establish the existence and uniqueness for the numerical solutions of the SEM using ICFEM discretization proposed in \cite{XC} near a specified true solution, which are missed in \cite{XC}. Finally, extensive numerical results for the model problems with cubic and triangular nonlinearities were reported to present multiple solutions with their graphics for visualization and verify the feasibility of our algorithm. Numerical tests on the efficiency and spectral convergence rate of the SGSEM and the numerical comparison of the efficiency of the SEM based on the ICLG and ICFEM discretizations were also provided.

\Acknowledgements{The authors would like to thank the reviewers for their careful review and constructive and valuable comments.}

\end{document}